\newtheorem{theorem}{Theorem}[section]
\newtheorem{definition}[theorem]{Definition}
\newtheorem{lemma}[theorem]{Lemma}
\newtheorem{proposition}[theorem]{Proposition}
\newtheorem{remark}[theorem]{Remark}
\theoremstyle{definition} \theoremstyle{remark}
\numberwithin{equation}{section}
\newcommand\R{{\mathbb R}}
\newcommand{\G}{\mathcal{G}}
\newcommand{\La}{\Lambda}
\newcommand{\la}{\lambda}
\newcommand{\Ga}{\Gamma}
\newcommand{\ga}{\gamma}
\begin{document}

	\title{Multiplicity result for a mass supercritical NLS with a partial confinement
	}

\author{Louis Jeanjean\footnote{louis.jeanjean@univ-fcomte.fr}}  \affil{{\it Universit\'e Marie et Louis Pasteur, CNRS, LmB (UMR 6623), F-25000, Besan\c{c}on, France}}

\author{Linjie Song\footnote{songlinjie18@mails.ucas.edu.cn}}  \affil{{\it Tsinghua University, Department of Mathematical Sciences, Beijing 100084, China}}
	
\date{}
\maketitle

\begin{abstract}
We consider an NLS equation in $\R^3$ with partial confinement and mass supercritical nonlinearity. 
In Bellazzini, Boussaid, Jeanjean and Visciglia (Comm. Math. Phys. 353, 2017, 229-251) for such a problem, a solution with a prescribed $L^2$ norm was obtained, as a local minimum of the energy, and the existence of a second solution, at a mountain pass level, was proposed as an open problem. We give here a positive, non-perturbative, answer to this problem. Our solution is obtained as a limit of a sequence of solutions of corresponding problems on bounded domains of $\R^3$. The symmetry of solutions on bounded domains is used centrally in the convergence process.
\end{abstract}

\medskip

{\small \noindent \text{Key Words:} NLS equations; Constrained functionals; $L^2$-supercritical.
	
\medskip

\noindent\text{Mathematics Subject Classification:} 35A15, 35J60}

\medskip

{\small \noindent \text{Acknowledgements:} This work has been carried out in the framework of the Project NQG (ANR-23-CE40-0005-01), funded by the French National Research Agency (ANR). It was completed when the second author was visiting the Universit\'e Marie et Louis Pasteur, LmB (UMR 6623). Linjie Song is supported by "Shuimu Tsinghua Scholar Program" and by "National Funded Postdoctoral Researcher Program" (GZB20230368), and funded by "China Postdoctoral Science Foundation" (2024T170452). The first author thanks A. Farina for its help on proving Theorem \ref{thmasymonBR}.
	}

\medskip

{\small \noindent \text{Statements and Declarations:} The authors have no relevant financial or non-financial interests to disclose.}

\medskip

{\small \noindent \text{Data availability:} Data sharing is not applicable to this article as no datasets were generated or analysed during the current study.}

\section{Introduction} \label{secint}

In this paper we investigate the existence and multiplicity of critical points for the \emph{mass supercritical} NLS energy functional with a \emph{partial confinement} $E: H \to \R$ defined by
\begin{align*} 
%\label{eqenergy}
	 E(u) = \frac12\int_{\mathbb{R}^3}(|\nabla u|^2 + (x_{1}^{2} + x_{2}^{2})|u|^2)dx - \frac1{p+1}\int_{\mathbb{R}^3}|u|^{p+1}dx, \quad \frac73 < p <5,
\end{align*}
constrained on the $L^2$ sphere
\begin{align*} 
%\label{eqL2sphere}
	S_\mu := \bigl\{u \in H: \int_{\mathbb{R}^3}|u(x_1,x_2,x_3)|^2dx = \mu\}, \quad \mu > 0,
\end{align*}
where $x = (x_1,x_2,x_3) \in \R^3$ and
\begin{align*}
	H := \bigl\{u \in H^1(\R^3): \int_{\R^3}(x_1^2+x_2^2)|u|^2dx < \infty \bigr\}.
\end{align*}
The norm of $H$ is
\begin{align*}
	\|u\|_H^2 := \int_{\R^3}(|\nabla u|^2 + (x_1^2+x_2^2+1)|u|^2)dx.
\end{align*}
In the sequel, we denote $V(x_1,x_2,x_3) = x_1^2+x_2^2$. Critical points, also called bound states, solve the stationary nonlinear Schr\"odinger equation (NLS) with the partial confinement on $\R^3$
\begin{align} \label{eqequation}
	-\Delta u + V(x_1,x_2,x_3)u + \la u = |u|^{p-1}u
\end{align}
for some Lagrange multiplier $\la$. In turn, solutions to \eqref{eqequation} give standing waves of the time-dependent NLS with partial confinement on $\R^3$,
\begin{align} \label{timenls}
	\mathrm{i}\, \partial_t \psi
	= -\Delta_{x} \psi + V(x_1,x_2,x_3)\psi -|\psi|^{p-1} \psi,
\end{align}
via the ansatz $\psi(t,x) = e^{\mathrm{i} \lambda t}u(x)$. The mass constraint $\int_{\R^3}|u|^2dx = \mu$ is meaningful from a dynamics perspective as the mass (or charge), as well as the energy, is conserved by the NLS flow.

We point out that the physically relevant cubic nonlinearity $p = 3$ is covered by our assumptions. Cubic NLS, often referred as Gross-Pitaevskii equation (GPE), is very important in physics and one of the main physical motivations is the study of matter waves in the theory of Bose-Einstein condensates (BEC). Bose-Einstein condensate consists in a macroscopic ensemble of bosons that at very low temperature occupy the same quantum states. From the mathematical side, the properties of BEC at temperature much smaller than the critical temperature can be well described by the three-dimensional Gross-Pitaevskii equation (GPE) for the macroscopic wave function. GPE incorporates information about the trapping potential as well as the interaction among the particles. Our assumptions cover the case of Bose-Einstein condensate with attractive interactions and partial confinement. The latter includes the limit case of the so-called cigar-shaped model, see \cite{BC}. We refer to \cite{BBJV} for additional elements regarding the physical motivations and to \cite{ArCa,ChGuGuLiSh,HoJi, Oh} for recent mathematical contributions on the NLS with a partial confinement.

To our knowledge the existence of critical points for mass supercritical NLS energy functional with a partial confinement $E$ was first touched by \cite{BBJV}, in which a (positive) local minimizer was found when $0 < \mu < \mu_0$ where $\mu_0$ is a positive constant (see \cite[Theorem 1]{BBJV}). Further, assuming, in addition, that $\mu > 0$ is small enough \cite[Theorem 3]{BBJV} showed that such local minimizer is a ground state in the sense introduced in \cite{BJ}.
\begin{definition}[Ground state] \label{defgs}
	A critical point $u \in S_\mu$ of $E$ constrained on $S_\mu$ is called a (normalized) ground state if and only if
	\begin{align*}
		E(u) = \inf \bigl\{E(v): v \text{ is a critical point of } E \text{ constrained on } S_\mu \bigr\}.
	\end{align*}
\end{definition}
\noindent Recently \cite{SY} provided a new, direct, proof of the existence of a ground state of $E$ which holds for any $0 < \mu < \tilde\mu_0$. Here  $\tilde\mu_0$ is an explicit positive constant not obtained by a limit process, as in \cite[Theorem 3]{BBJV}. It was also proved that this is true for the mass critical case $p = 7/3$.

It was mentioned in \cite [Remark 1.10]{BBJV} that $E$ has a mountain pass geometry on $S_\mu$ for all $\mu$ for which there exists a local minimizer, and, that an interesting, but maybe difficult, problem would be to prove the existence of a second critical point for $E$ at the mountain pass level. 

In \cite{WW}, J. Wei and Y. Wu gave a first answer to this open problem. They proved that $E$ has indeed a second positive critical point, which is at a mountain pass level, when $\mu >0$ is sufficiently small. However, this result is obtained using a perturbative approach, so it is not possible to estimate how small $\mu >0$ must be. As far as we know, there is no non-perturbation result up to now, and to provide a non-perturbation result is the main motivation of this paper.

We introduce
\begin{align} \label{eqG}
	\G := \bigl\{u \in H: \int_{\R^3}|\nabla u|^2dx > \frac{3p-3}{2p+2}\int_{\R^3}|u|^{p+1}dx \bigr\}, \quad \G_\mu := \G \cap S_\mu,
\end{align}
with
\begin{align*}
	\partial \G_\mu = \bigl\{u \in S_\mu: \int_{\R^3}|\nabla u|^2dx = \frac{3p-3}{2p+2}\int_{\R^3}|u|^{p+1}dx \bigr\},
\end{align*}
and define
\begin{align}\label{min_boundary}
	 c_\mu := \inf_{u \in \G_\mu}E(u), \quad \mbox{and} \quad
	 \nu_\mu := \inf_{u \in \partial \G_\mu}E(u).
\end{align}

%\begin{align*}
%	& c_\mu := \inf_{u \in \G_\mu}E(u), \\
%	& \nu_\mu := \inf_{u \in \partial \G_\mu}E(u).
%\end{align*}

It is standard to prove, see for example \cite{BJ}, that any critical point of $E$ satisfies the following Pohozaev identity
\begin{align} \label{eqpohozaev}
	\int_{\R^3}|\nabla u|^2dx = \int_{\R^3}V|u|^2 dx + \frac{3p-3}{2p+2}\int_{\R^3}|u|^{p+1}dx.
\end{align}
The introduction of the subset $\G_\mu$ was  majorly inspired by \eqref{eqpohozaev}. It is readily seen that $E$ is bounded from below on $\G_\mu$ and thus $c_\mu > -\infty$ (in fact, $c_\mu > 0$ when $7/3 < p < 5$) is well defined. When $0 < \mu < \mu_1$ where $\mu_1$ is given by \eqref{eqmu1} later, we have $c_\mu < \nu_\mu$. Then any minimizer of $E$ constrained on $\G_\mu$ is in the interior of $\G_\mu$ and is a critical point of $E$ constrained on $S_\mu$. Further, by \eqref{eqpohozaev}, any critical point of $E$ constrained on $S_\mu$ is in $\G_\mu$. Hence, every minimizer of $E$ in $\G_\mu$ is a ground state. We will indeed show the existence of a ground state at the level $c_\mu$ when $0 < \mu < \mu_1$. 

After finding a ground state, we search for a critical point at a mountain pass level. When $0 < \mu < \mu_1$, we prove the existence of $0 \leq w_0 \in \G_\mu$, $0 \leq w_1 \in S_\mu \backslash \G_\mu$ such that $E(w_i) < \nu_\mu$, $i = 0,1$, and 
\begin{align*}
	m_\mu := \inf_{\ga \in \Ga}\sup_{t \in [0,1]}E(\ga(t)) \geq \nu_\mu > \max\bigl\{E(w_0),E(w_1)\bigr\},
\end{align*}
where
\begin{align} \label{eqGa}
	\Ga := \bigl\{\ga \in C([0,1],S_\mu): \ga(0) = w_0, \ga(1) = w_1\bigr\}.
\end{align}
This shows that $E$ indeed has a mountain pass geometry on $S_\mu$ when $0 < \mu < \mu_1$. We will establish the existence of a critical point at the level $m_\mu$.

We denote $C_{p+1} > 0$ the best constant of Gagliardo-Nirenberg inequality on $\R^3$, i.e., 
\begin{align} \label{eqgninequ}
	\int_{\R^3}|u|^{p+1}dx \leq C_{p+1}\left(\int_{\R^3}|\nabla u|^2dx \right)^{\frac{3p-3}{4}} \left(\int_{\R^3}|u|^2dx \right)^{\frac{5-p}{4}}
\end{align}
%Note that for any $R > 0$ and $u \in H^1_0(B_R)$,
%\begin{align} \label{eqgnineuqonBR}
%	\int_{B_R}|u|^{p+1}dx \leq C_{p+1}\left(\int_{B_R}|\nabla u|^2dx \right)^{\frac{3p-3}{4}} \left(\int_{B_R}|u|^2dx \right)^{\frac{5-p}{4}}.
%\end{align}
and let
\begin{align}\label{infspectrum}
\La_0 := \inf (\hbox{ \em spec }(-\Delta + V)) = \inf_{\int_{\R^3}|w|^2dx = 1}\int_{\R^3}\left(|\nabla w|^2 +(x_1^2+x_2^2)|w|^2 \right)dx.
\end{align}
Our main result is as follows.
\begin{theorem} \label{thmmulti}
	Let $7/3 < p <5$ and
	\begin{align} \label{eqmu1}
		\mu_{1} := \frac{(3p-7)^{\frac{3p-7}{2p-2}}(2p+2)^{\frac{2}{p-1}}}{(3p-3)^{\frac32}C_{p+1}^{\frac{2}{p-1}}\La_0^{\frac{2p-2}{3p-7}}}
		%\min \Biggl\{\left( \frac{3p-3}{2p+2}C_{p+1}\La_0^{\frac{3p-7}{4}}\right)^{-\frac2{p-2}}, \frac{(3p-7)^{\frac{3p-7}{2p-2}}(2p+2)^{\frac{2}{p-1}}}{(3p-3)^{\frac32}C_{p+1}^{\frac{2}{p-1}}\La_0^{\frac{2p-2}{3p-7}}} \Biggr\},
	\end{align}
	where  $C_{p+1} >0$  and $\La_0$ are defined in \eqref{eqgninequ}  and \eqref{infspectrum} respectively. Then, for any $0 < \mu < \mu_1$, there is at least two positive critical points $u_1, u_2$ of $E$ constrained on $S_\mu$, $u_1$ is a ground state at the level $c_\mu$ and $u_2$ is at the mountain pass level $m_\mu$.
\end{theorem}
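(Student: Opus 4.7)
\medskip\noindent\textit{Proof plan.}
The plan is to construct both $u_1$ and $u_2$ as limits of critical points of corresponding problems on balls $B_R\subset\R^3$ with Dirichlet boundary conditions, exploiting the axial symmetry about the $x_3$-axis and evenness in $x_3$ to recover the compactness lost along the unconfined $x_3$ direction. The mountain-pass geometry on $S_\mu$, the existence of endpoints $w_0,w_1$, and the inequality $m_\mu\ge\nu_\mu>\max\{E(w_0),E(w_1)\}$ are taken from the discussion preceding the theorem.

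For each large $R$ I would work inside the closed linear subspace $\Sigma_R\subset H\cap H^1_0(B_R)$ of functions that are axially symmetric about the $x_3$-axis and even in $x_3$; here the Sobolev embedding into $L^q(B_R)$ is compact for $2\le q<6$. Standard constrained variational arguments inside $\Sigma_R$ then produce a positive minimizer $u_{1,R}$ of $E$ on $\G_\mu\cap\Sigma_R$ at the level $c_{\mu,R}$, and a positive mountain-pass critical point $u_{2,R}$ at the level $m_{\mu,R}$ (using endpoints $w_{i,R}\in\Sigma_R$ approximating $w_i$, and the fact that Palais--Smale sequences are relatively compact on bounded domains). By Palais's principle of symmetric criticality, each $u_{i,R}$ satisfies the Euler--Lagrange equation on $B_R$ with Lagrange multiplier $\la_{i,R}$, and a moving-planes argument in $x_3$ (the content of Theorem \ref{thmasymonBR}) strengthens evenness in $x_3$ to strict monotonicity in $|x_3|$.

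Using cut-offs of admissible paths as comparison test functions, one checks that $c_{\mu,R}\to c_\mu$ and $m_{\mu,R}\to m_\mu$ as $R\to\infty$. A Pohozaev identity on $B_R$ -- differing from \eqref{eqpohozaev} only by the nonnegative boundary term $\tfrac{R}{2}\int_{\partial B_R}|\partial_\nu u|^2 dS$ -- combined with the level bounds yields uniform $H$-bounds on $u_{i,R}$ and uniform control of $|\la_{i,R}|$. Thus along subsequences $u_{i,R}\rightharpoonup u_i$ weakly in $H$, $\la_{i,R}\to\la_i\in\R$, and $u_i$ solves \eqref{eqequation} on $\R^3$.

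The main obstacle, and the step where the symmetry plays its decisive role, is ruling out mass leakage in the $x_3$ direction when passing to the limit: the potential confines in $(x_1,x_2)$, but $u_{i,R}$ could a priori concentrate along a sequence of points $(0,0,y_R)$ with $|y_R|\to\infty$. Here evenness together with monotonicity in $|x_3|$ from Theorem \ref{thmasymonBR} forces the maximum of $u_{i,R}$ to lie on the plane $\{x_3=0\}$, and a uniform $L^\infty$ bound obtained by Moser iteration from the uniform $H$-estimate enables a comparison with the exponentially decaying ground-state profile of $-\Delta+V$ in $(x_1,x_2)$, producing a uniform tail estimate of the form $u_{i,R}(x)\le Ce^{-c|x_3|}$. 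This promotes weak to strong convergence in $L^{p+1}$ and in $L^2$, giving $\|u_i\|_{L^2}^2=\mu$ and $E(u_i)=c_\mu$ (resp.\ $m_\mu$). The strict inequality $c_\mu<\nu_\mu\le m_\mu$ ensures $u_1\ne u_2$, and strict positivity of both solutions follows from the maximum principle applied to the limit equation.
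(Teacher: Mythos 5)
Your overall architecture (solve on $B_R$, use symmetry, pass to the limit) matches the paper's, but two steps in your proposal are genuinely incomplete, and the second one is the heart of the matter. First, on the ball: you assert that a mountain-pass critical point $u_{2,R}$ follows because ``Palais--Smale sequences are relatively compact on bounded domains.'' That is only true for \emph{bounded} Palais--Smale sequences, and for a mass-supercritical $L^2$-constrained problem the mountain-pass theorem does not produce bounded ones; the Pohozaev identity on $B_R$ applies to exact critical points, not to PS sequences, so it cannot be used to force boundedness either. The paper resolves this with the monotonicity trick, introducing the family $E_{R,\tau}$, obtaining critical points of $E_{R,\tau}$ for a.e.\ $\tau$ near $1$, and only then using the Pohozaev identity \eqref{eqpohozaevonBR} on those exact solutions to get a bounded sequence as $\tau\to 1^-$. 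Your proposal skips this entirely.

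Second, and more seriously, your compactness argument as $R\to\infty$ relies on a uniform tail estimate $u_{i,R}(x)\le Ce^{-c|x_3|}$ obtained by comparison with the transverse ground state. This estimate is not available a priori: projecting the equation onto $\Psi_0(x_1,x_2)$, the decay rate along $x_3$ is governed by $\sqrt{\La_0+\la_{i,R}}$, and nothing in your argument prevents $\la_{i,R}\to-\La_0$, in which case the exponential rate degenerates and the solutions may spread out along $x_3$ (uniformly small in $L^\infty$, max still at $x_3=0$, yet carrying all of the mass off to infinity — so ``max on the plane $\{x_3=0\}$ plus a uniform $L^\infty$ bound'' rules out nothing). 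This degenerate scenario is precisely what the paper must exclude, and it does so by two ingredients absent from your proposal: (i) the level condition $\beta\neq\tfrac12\La_0\mu$ (coming from $c_\mu<\tfrac12\La_0\mu<m_\mu$), which shows that if $\int|u_n|^{p+1}\to 0$ then necessarily $\la_n\to-\La_0$ and the energy converges to $\tfrac12\La_0\mu$, a contradiction — this gives non-vanishing, after which the Lions-type symmetry lemma (Lemma \ref{lemstrongconvergence}) yields strong $L^{p+1}$ convergence; and (ii) a new Liouville-type theorem (Proposition \ref{propnoposisolu}) showing that no positive symmetric decreasing solution exists with $\la=-\La_0$, which forces $\la^*>-\La_0$ for the limit and thereby makes the quadratic form $\int(|\nabla u|^2+V|u|^2+\la^*|u|^2)$ coercive on $H$; only then does strong $L^{p+1}$ convergence upgrade to strong $H$ convergence and $\|u_i\|_{L^2}^2=\mu$. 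Without these two ingredients your limit functions could be zero or could lose $L^2$ mass, so the conclusion $E(u_1)=c_\mu$, $E(u_2)=m_\mu$ with $u_i\in S_\mu$ does not follow from what you have written.
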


%\begin{remark}
%	We emphasize that the theorem is not a perturbation result, in the sense that the value $\mu_1$ will not be obtained by any limit process, and can be explicitly estimated.
%\end{remark}

\begin{remark} \label{rmkomega1}
	As well known, see, for example, \cite[Lemma 3.1]{BaJe}, for any $\mu > 0$, there exists a unique $\omega_\mu > 0$ such that the equation
	\begin{align}\label{limitequation}
	-\Delta u + \omega_\mu u = u^p \quad \text{in } \R^3, \quad\quad u(x) \to 0 \quad \text{as } |x| \to \infty,
	\end{align}
	has a positive solution in $H^1(\R^3)$ with $\int_{\R^3}|u|^2dx = \mu$. Such positive solution is unique. We denote it $u_{\omega_{\mu}}$.
\end{remark}

Our second result provides properties of the critical points obtained in Theorem \ref{thmmulti}.
\begin{theorem} \label{thmasym}
	Let $u_1, u_2$ be the critical points of $E$ constrained on $S_\mu$ given by Theorem \ref{thmmulti}. Then $u_i(x_1,x_2.x_3)$ is radially symmetric and decreasing w.r.t. $(x_1,x_2)$ and w.r.t. $x_3$, $i = 1,2$. Moreover, there
	exists $\la_i = \la(u_i)$ such that
	\begin{align*}
		-\Delta u_i + Vu_i +\la_i u_i = |u_i|^{p-1}u_i \quad \text{in} \ \mathbb{R}^{3},
	\end{align*}
    with the estimates
    \begin{align}
    	& -\La_0 < \la_1 < -\La_0\left( 1 - C_{p+1}\left( \frac{3p-3}{3p-7}\La_0\right)^{\frac{3p-7}{4}}\mu^{\frac{p-1}{2}} \right) \quad \text{when } 0 < \mu < C_{p+1}^{-\frac2{p-1}}\left( \frac{3p-3}{3p-7}\La_0\right)^{-\frac{3p-7}{2p-2}}, \label{eqestiofla1} \\
%    	& \la_2 > C_{p+1}^{-\frac4{3p-7}}\frac{(3p-7)(2p+2)^{\frac{11-3p}{3p-7}}}{(3p-3)^{\frac{3p-3}{3p-7}}}\mu^{-\frac{2p-2}{3p-7}} \quad \text{for all } 0 < \mu < \mu_1, \label{eqestiofla2} \\==
    	& \la_2 = \left( \omega_1 + o(1)\right) \mu^{-\frac{2p-2}{3p-7}} \quad \text{as } \mu \to 0, \label{eqestiofla2}	
    \end{align} 
    where $\omega_1 > 0$ is given by Remark \ref{rmkomega1}. In addition, as $\mu \to 0$,
    \begin{align}
    	& \|u_1 - \Psi_0(x_1,x_2)\varphi_0(x_3)\|_{H} = o(\sqrt{\mu}), \label{eqestiofu1} \\
    	& \mu^\frac{2}{3p-7}u_2(\mu^{\frac{p-1}{3p-7}}x) \to u_{\omega_1} \quad \text{strongly in } H^1(\R^3), \label{eqestiofu2}
    \end{align}
    where $\Psi_0(x_1,x_2)$ is the unique normalized positive eigenvector of the quantum harmonic oscillator $-\Delta_{x_1,x_2} +x_1^2+x_2^2$, 
    $$
    \varphi_0(x_3)= \int_{\R^2} u(x_1,x_2,x_3) \Psi_0(x_1,x_2) dx_1dx_2,
    $$
    and $u_{\omega_1} \in H^1(\R^3)$ is the unique positive solution of \eqref{limitequation}.
    %$$
    %-\Delta u + \omega_1 u = u^p \quad \text{in } \R^3, \quad\quad u(x) \to 0 \quad \text{as } |x| \to \infty.
   % $$
\end{theorem}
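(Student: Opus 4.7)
The plan is to prove, in order, the symmetry and Lagrange multiplier equation; the estimate \eqref{eqestiofla1} and the asymptotic \eqref{eqestiofu1} for the ground state $u_1$; and finally the rescaling analysis that yields \eqref{eqestiofla2} and \eqref{eqestiofu2} for the mountain pass solution $u_2$. The guiding heuristic is that, as $\mu\to 0$, $u_1$ concentrates on the transverse eigenmode $\Psi_0$, while $u_2$ rescales toward the zero-potential ground state $u_{\omega_1}$.

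The Lagrange multiplier rule for $S_\mu$-constrained critical points gives $\lambda_i\in\R$ with $-\Delta u_i+Vu_i+\lambda_i u_i=|u_i|^{p-1}u_i$, and the strong maximum principle yields $u_i>0$ (after replacing $u_i$ by $|u_i|$, admissible by rearrangement). For the symmetry, $V$ is radially increasing in $(x_1,x_2)$ and invariant under $x_3\mapsto -x_3$: Schwarz symmetrization in $(x_1,x_2)$ strictly lowers $\int|\nabla u|^2$ and $\int V u^2$ while preserving the $L^2$ and $L^{p+1}$ norms, and Steiner symmetrization in $x_3$ lowers $\int|\nabla u|^2$ preserving everything else. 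Since $u_1, u_2$ arise in Theorem \ref{thmmulti} as $H$-limits of critical points on balls $B_R$ (which can be taken symmetrized), the symmetry transfers to the limit.

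For \eqref{eqestiofla1}, testing the equation for $u_1$ against $u_1$ yields
\begin{equation*}
\int(|\nabla u_1|^2+Vu_1^2)+\lambda_1\mu=\int u_1^{p+1},
\end{equation*}
and $\int(|\nabla u_1|^2+Vu_1^2)\geq\Lambda_0\mu$ from \eqref{infspectrum} gives $\lambda_1+\Lambda_0\leq\mu^{-1}\int u_1^{p+1}$. The Pohozaev identity \eqref{eqpohozaev} rewrites $c_\mu=E(u_1)$ as $c_\mu=\frac{3p-7}{2(3p-3)}\int|\nabla u_1|^2+\frac{3p+1}{2(3p-3)}\int V u_1^2$; the test function $\Psi_0(x_1,x_2)\chi_\mu(x_3)$ for a suitable rescaled bump $\chi_\mu$ yields $c_\mu\leq\Lambda_0\mu/2$, hence $\int|\nabla u_1|^2\leq\frac{3p-3}{3p-7}\Lambda_0\mu$, and \eqref{eqgninequ} then produces the claimed quantitative estimate. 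The lower bound $\lambda_1>-\Lambda_0$ is a spectral/Perron--Frobenius statement: $u_1\in L^2$ is a positive eigenfunction of $-\Delta+V-u_1^{p-1}$ with eigenvalue $-\lambda_1$, and testing against $\Psi_0(x_1,x_2)\eta(x_3)$ with $\eta$ sufficiently localized shows $\inf\mathrm{spec}(-\Delta+V-u_1^{p-1})<\Lambda_0$, which coincides with $-\lambda_1$ since $u_1>0$ is the principal eigenfunction. For \eqref{eqestiofu1}, orthogonally decompose $u_1=\Psi_0\varphi_0+v$ with $\int v(\cdot,\cdot,x_3)\Psi_0\,dx_1dx_2=0$ a.e.; Fubini and the transverse spectral gap $\Lambda_1-\Lambda_0>0$ (where $\Lambda_1$ is the second eigenvalue of $-\Delta_{x_1,x_2}+x_1^2+x_2^2$) give $\int(|\nabla u_1|^2+Vu_1^2)\geq\Lambda_0\mu+(\Lambda_1-\Lambda_0)\|v\|_{L^2}^2$, and combining with the upper bound on $c_\mu$ forces $\|v\|_{L^2}^2=o(\mu)$ and then $\|v\|_H=o(\sqrt\mu)$.

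For $u_2$, set $\sigma=\mu^{(p-1)/(3p-7)}$ and $\tilde u_2(x)=\sigma^{2/(p-1)}u_2(\sigma x)$; then $\|\tilde u_2\|_{L^2}=1$ and
\begin{equation*}
-\Delta\tilde u_2+\sigma^4V\tilde u_2+\sigma^2\lambda_2\tilde u_2=\tilde u_2^p.
\end{equation*}
A scaling analysis gives $m_\mu=\sigma^{-(5-p)/(p-1)}(m^{(0)}+o(1))$, where $m^{(0)}$ is the normalized mountain-pass level for $-\Delta u+\omega u=u^p$ on $\|u\|_{L^2}=1$, attained at $u_{\omega_1}$: the upper bound arises from lifting a path realizing $m^{(0)}$ via $\Psi_0$-transverse profiles, and the lower bound from the Pohozaev structure of $E$. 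This yields uniform $H^1$-bounds on $\tilde u_2$. The inherited symmetries of $\tilde u_2$ (radial in $(x_1,x_2)$, even-decreasing in $x_3$) prevent any weak-$H^1$ limit along $x_3$-translations from escaping to infinity (an even-in-$x_3$ limit centered at $|y|\to\infty$ must vanish), and dichotomy is ruled out by $m^{(0)}$-comparison; concentration-compactness then yields $\tilde u_2\to u^*$ in $H^1(\R^3)$ along a subsequence, with $u^*$ solving $-\Delta u^*+\omega^* u^*=(u^*)^p$, $\|u^*\|_{L^2}=1$, $\sigma^2\lambda_2\to\omega^*>0$; by Remark \ref{rmkomega1} $u^*=u_{\omega_1}$ and $\omega^*=\omega_1$, which is \eqref{eqestiofla2} and \eqref{eqestiofu2}. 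The main difficulty is this compactness step: after rescaling the partial confinement vanishes and $x_3$-translation invariance is restored, so one must preclude both vanishing and dichotomy; the inherited symmetries force concentration at the origin, and obtaining the sharp scaling $m_\mu\sim\sigma^{-(5-p)/(p-1)}m^{(0)}$---in particular verifying that the non-scale-invariant contribution $\int V u_2^2$ is of lower order than $m_\mu$ after rescaling---is the technical crux.
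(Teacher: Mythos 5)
Your overall architecture coincides with the paper's: the same rescaling $u_2\mapsto\mu^{2/(3p-7)}u_2(\mu^{(p-1)/(3p-7)}x)$ with $\omega=\mu^{(2p-2)/(3p-7)}\la_2$, the same Pohozaev/Gagliardo--Nirenberg bound $\int|\nabla u_1|^2\le\frac{3p-3}{3p-7}\La_0\mu$ for \eqref{eqestiofla1}, and the spectral-gap decomposition for \eqref{eqestiofu1} (which the paper delegates to \cite{BBJV}). But two steps as you present them have genuine gaps. First, the symmetry. Schwarz/Steiner rearrangement only produces a symmetric \emph{competitor} with no larger energy; it cannot show that a given critical point is itself symmetric, and it is useless at the mountain-pass level $m_\mu$ (symmetrizing the paths in $\Ga$ does not preserve continuity in $H^1$, nor the min-max value from below). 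The paper obtains the symmetry of $u_2$ only because the approximating solutions $u_{2,R}$ on balls are positive solutions of an autonomous-in-$x_3$, radial-in-$(x_1,x_2)$ equation on $B_R$, to which Gidas--Ni--Nirenberg moving planes applies regardless of the variational characterization; the symmetry then passes to the strong $H$-limit. Your parenthetical ``which can be taken symmetrized'' does not supply this.

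Second, the compactness for the rescaled $\tilde u_2$. The inherited symmetry gives, via Lions' lemma (Lemma \ref{lemstrongconvergence}), strong convergence in $L^{p+1}(\R^3)$ and hence a nontrivial limit $u^*>0$ with $\omega^*>0$; but it does \emph{not} prevent loss of $L^2$ mass by spreading, and without $\|u^*\|_{L^2}=1$ you cannot identify $\omega^*=\omega_1$ and $u^*=u_{\omega_1}$, which is precisely the content of \eqref{eqestiofla2}--\eqref{eqestiofu2}. ``Dichotomy is ruled out by $m^{(0)}$-comparison'' presupposes a strict subadditivity of the mountain-pass level that is neither stated nor easy to get. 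The paper closes this hole differently: once $\omega^*>0$, the rescaled solutions satisfy $-\Delta u_n+\frac12\omega^*u_n\le u_n^p$ for large $n$ and are uniformly bounded, so they decay exponentially uniformly in $n$, which forces strong $L^2$ (hence $H^1$) convergence. This decay estimate, together with the verification that $\la_2>0$ for small $\mu$ (the paper's Lemma \ref{lemla2tendstoinfinity}, via blow-up analysis) and the endpoint-matching $w_{\mu,i}=T_\mu^{-1}T_{\bar\mu}w_{\bar\mu,i}$ needed to prove the exact identity $m_{1,\mu}=\mu^{(5-p)/(3p-7)}m_\mu$, are the missing ingredients. A minor further point: your argument for $\la_1>-\La_0$ via a ``sufficiently localized'' test function $\Psi_0\eta$ is backwards (localizing $\eta$ increases the kinetic cost; the 1D weak-binding mechanism needs wide test functions), whereas the paper's one-line route is $-\la_1<2\mu^{-1}c_\mu<\La_0$.
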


As mentioned above, the existence of a ground state in Theorem \ref{thmmulti} is not new. However, our proof is quite different from the ones in \cite{BBJV} and in \cite{SY}. The existence of a critical point at the mountain pass level in Theorem \ref{thmmulti} is original and is the major motivation of our study. The main difficulty is the lack of compactness, due to the translation invariance w.r.t. $x_3$. Indeed if one assumes that the potential is fully harmonic, namely that $x_1^2 + x_2^2$ is replaced by $x_1^2 + x_2^2 + x_3^2$ then one benefits from the compactness of the inclusion of $H$ (with $x_1^2 + x_2^2 + x_3^2$) into $L^q(\R^3)$ where $q \in [2, 6)$ see for
example \cite[Lemma 3.1]{Zhang} and the proof of Theorem \ref{thmmulti} would be rather simple. To overcome this lack of compactness, we first consider the problem on $B_R := \{x \in \R^3: |x| < R\}$ with $R$ large enough. For $R > 0$ and $u \in H^1_0(B_R)$, we define
\begin{align*} 
%\label{eqenergyonBR}
	E_R(u) = \frac12\int_{B_R}(|\nabla u|^2 + (x_{1}^{2} + x_{2}^{2})|u|^2)dx - \frac1{p+1}\int_{B_R}|u|^{p+1}dx, \quad \frac73 < p <5,
\end{align*}
and
\begin{align*} 
%\label{eqL2sphereonBR}
	S_{\mu,R} := \bigl\{u \in H^1_0(B_R): \int_{B_R}|u(x_1,x_2,x_3)|^2dx = \mu\}, \quad \mu > 0.
\end{align*}
We further introduce
\begin{align*} 
%\label{eqGonBR}
	\G_R := \bigl\{u \in H^1_0(B_R): \int_{B_R}|\nabla u|^2dx > \frac{3p-3}{2p+2}\int_{B_R}|u|^{p+1}dx \bigr\}, \quad \G_{\mu,R} := \G_R \cap S_{\mu,R},
\end{align*}
with
\begin{align*}
	\partial \G_{\mu,R} = \bigl\{u \in S_{\mu,R}: \int_{B_R}|\nabla u|^2dx = \frac{3p-3}{2p+2}\int_{B_R}|u|^{p+1}dx \bigr\}.
\end{align*}
For $0 < \mu < \mu_1$ and sufficiently large $R$ (depending on $\mu$) $\G_{\mu,R}$ is nonempty and we define
\begin{align*}
	c_{\mu,R} := \inf_{u \in \G_{\mu,R}}E_R(u).
	%& \nu_{\mu,R} := \inf_{u \in \partial \G_{\mu,R}}E_R(u).
\end{align*}
Moreover, without loss of generality, we can assume that $w_0$ and $w_1$ used to define $\Ga$ in \eqref{eqGa} have compact support contained in $B_R$. Hence, setting $w_{0,R} \equiv w_0, w_{1,R} \equiv w_1$, we have that
\begin{align*}
	m_{\mu,R} := \inf_{\ga \in \Ga_R}\sup_{t \in [0,1]}E_R(\ga(t)) \geq \nu_{\mu} > \max\bigl\{E_R(w_{0,R}),E_R(w_{1,R})\bigr\},
\end{align*}
where
\begin{align*} 
%\label{eqGaR}
	\Ga_R := \bigl\{\ga \in C([0,1],S_{\mu,R}): \ga(0) = w_{0,R}, \ga(1) = w_{1,R}\bigr\}.
\end{align*}
We will search for critical points of $E_R$ constrained on $S_{\mu,R}$ at levels $c_{\mu,R}$ and $m_{\mu,R}$. Any critical point of $E_R$ constrained on $S_{\mu,R}$ satisfies the following Pohozaev identity
\begin{align} \label{eqpohozaevonBR}
	\int_{B_R}|\nabla u|^2dx = \frac R2\int_{\partial B_R} \Bigl|\frac{\partial u}{\partial n}\Bigr|^2d\sigma + \int_{B_R}V|u|^2 dx + \frac{3p-3}{2p+2}\int_{B_R}|u|^{p+1}dx,
\end{align}
and thus is in $\G_{\mu,R}$. Therefore, critical points at the level $c_{\mu,R}$ are ground states of $E_R$.

\begin{theorem} \label{thmmultionBR}
	Let $7/3 < p <5$ and $0 < \mu < \mu_{1}$ where $\mu_1$ is given by Theorem \ref{thmmulti}.
	% and
	%\begin{align}
	%	\mu_{1,R} := \frac{(3p-7)^{\frac{3p-7}{2p-2}}(2p+2)^{\frac{2}{p-1}}}{(3p-3)^{\frac32}C_{p+1}^{\frac{2}{p-1}}\La_R^{\frac{2p-2}{3p-7}}}.
		%\min \Biggl\{\left( \frac{3p-3}{2p+2}C_{p+1}\La_R^{\frac{3p-7}{4}}\right)^{-\frac2{p-2}}, \frac{(3p-7)^{\frac{3p-7}{2p-2}}(2p+2)^{\frac{2}{p-1}}}{(3p-3)^{\frac32}C_{p+1}^{\frac{2}{p-1}}\La_R^{\frac{2p-2}{3p-7}}} \Biggr\},
	%\end{align}
	%where $\La_R$ is the first eigenvalue of $-\Delta + V$ on $B_R$ with the Dirichlet boundary condition and $C_{p+1} >0$ is the best constant of Gagliardo-Nirenberg inequality \eqref{eqgninequ} on $\R^3$. 
	There exists $R_0 = R_0(\mu) > 0$ such that for all $R > R_0$, there is at least two positive critical points $u_{1,R}, u_{2,R}$ of $E_R$ constrained on $S_{\mu,R}$, $u_{1,R}$ is a ground state at the level $c_{\mu,R}$ and $u_{2,R}$ is at the mountain pass level $m_{\mu,R}$.
\end{theorem}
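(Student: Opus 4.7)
The bounded domain $B_R$ restores compactness: the embedding $H^1_0(B_R)\hookrightarrow L^q(B_R)$ is compact for every $q\in[2,6)$, so the translation-invariance along $x_3$ that will plague the problem on $\R^3$ is no longer an issue. The plan is to run on $B_R$ the two variational procedures that will later produce $u_1$ and $u_2$: a direct minimization of $E_R$ on $\G_{\mu,R}$ for $u_{1,R}$, and a min--max construction on $\Ga_R$ for $u_{2,R}$. The ingredients already recorded for $\R^3$ (the Pohozaev identity \eqref{eqpohozaevonBR}, the strict inequality $c_\mu<\nu_\mu$ used to show that the ground state is interior, and the MP geometry encoded by $w_0,w_1$) all have bounded-domain analogues. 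Note that extending a function in $H^1_0(B_R)$ by zero gives $E(\tilde u)=E_R(u)$ and maps $\G_{\mu,R}$, $\partial\G_{\mu,R}$ into $\G_\mu$, $\partial\G_\mu$ respectively, hence $c_\mu\le c_{\mu,R}$ and $\nu_\mu\le\nu_{\mu,R}$; truncation-renormalization of near-optimizers gives $c_{\mu,R}\to c_\mu$ and $\nu_{\mu,R}\to\nu_\mu$ as $R\to\infty$.

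\textbf{Ground state at $c_{\mu,R}$.} Nonemptiness of $\G_{\mu,R}$ for $R\gg 1$ follows by truncating and renormalizing an element of $\G_\mu$. The strict inequality $c_{\mu,R}<\nu_{\mu,R}$, valid for $R$ large, is inherited from $c_\mu<\nu_\mu$ via the convergences above. The defining inequality of $\G_{\mu,R}$ yields at once
\[
E_R(u)\ge \frac{3p-7}{6(p-1)}\int_{B_R}|\nabla u|^2\,dx\qquad\text{for every }u\in\G_{\mu,R},
\]
so any minimizing sequence $(u_n)$ for $c_{\mu,R}$ is bounded in $H^1_0(B_R)$. The compact embedding into $L^{p+1}$ and weak lower semicontinuity then produce $u_{1,R}\in S_{\mu,R}\cap\overline{\G_{\mu,R}}$ with $E_R(u_{1,R})=c_{\mu,R}$; the strict inequality $c_{\mu,R}<\nu_{\mu,R}$ rules out $u_{1,R}\in\partial\G_{\mu,R}$, so $u_{1,R}$ lies in the interior of $\G_{\mu,R}$ and is a critical point of $E_R|_{S_{\mu,R}}$. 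Replacing $u_{1,R}$ by $|u_{1,R}|$ leaves all the constraints and $E_R$ unchanged, and the strong maximum principle applied to the Euler--Lagrange equation yields $u_{1,R}>0$ in $B_R$.

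\textbf{Mountain pass at $m_{\mu,R}$.} The geometric inequality $m_{\mu,R}\ge\nu_\mu>\max\{E_R(w_{0,R}),E_R(w_{1,R})\}$ is built into the assumptions, and a standard deformation argument on the Hilbert manifold $S_{\mu,R}$ supplies a Palais--Smale sequence $(u_n)\subset S_{\mu,R}$ at level $m_{\mu,R}$ with Lagrange multipliers $(\la_n)$. Since $p>7/3$, the energy identity and the Nehari identity (obtained by pairing the approximate equation with $u_n$) are compatible with $\|\nabla u_n\|_2\to\infty$, and this is the \emph{main obstacle} of the proof. I will handle it via a variant of Jeanjean's monotonicity trick, adapted to the bounded domain: one may take $w_{0,R},w_{1,R}$ compactly supported in $B_{R/2}$, so that the mass-preserving dilation $s\star u(x)=e^{3s/2}u(e^s x)$ keeps paths built from such endpoints inside $H^1_0(B_R)$ as long as $s\ge-\log 2$. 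The augmented functional $\widetilde E_R(u,s):=E_R(s\star u)$ on $S_{\mu,R}\times[-\log 2,\infty)$ retains the mountain pass geometry, and the ensuing Palais--Smale sequence satisfies an additional \emph{approximate} Pohozaev identity \eqref{eqpohozaevonBR}; together with the energy and Nehari identities this yields a uniform bound on $\|\nabla u_n\|_2$ and on $|\la_n|$. The compact embedding then promotes weak to strong convergence $u_n\to u_{2,R}$ in $H^1_0(B_R)$, producing a critical point at level $m_{\mu,R}$. Positivity is enforced by restricting the minimax class to non-negative paths (allowed because $w_{0,R},w_{1,R}\ge 0$ and the $L^2$-gradient flow preserves the non-negative cone) and applying the strong maximum principle.
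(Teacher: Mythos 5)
Your construction of the ground state $u_{1,R}$ follows essentially the paper's route (minimize on $\G_{\mu,R}$, boundedness from the defining inequality, compact embedding, and $c_{\mu,R}<\nu_{\mu,R}$ to exclude the boundary). One small point: bare weak lower semicontinuity does not by itself guarantee that the weak limit of a minimizing sequence stays in $\overline{\G_{\mu,R}}$, since a drop in $\|\nabla u_n\|_2^2$ under weak convergence could push the limit out of the set; the clean fix, which is what the paper does via Lemma \ref{lemcompactonBR} and Lemma \ref{lempropofgmu}, is to upgrade the minimizing sequence to a Palais--Smale sequence (Ekeland on $\overline{\G_{\mu,R}}$, using $c_{\mu,R}<\nu_{\mu,R}$ to keep the Ekeland points interior) and then use the compactness lemma. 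This is a fixable technicality.

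The substantive gap is in the mountain-pass half. What you describe is not the monotonicity trick but the dilation-augmented functional $\widetilde E_R(u,s)=E_R(s\star u)$, and this device does not transplant to $H^1_0(B_R)$: for $s<0$ the map $s\star u(x)=e^{3s/2}u(e^sx)$ enlarges supports, so it does not act on $H^1_0(B_R)$. Restricting the endpoints $w_{0,R},w_{1,R}$ to $B_{R/2}$ does not help, because the deformation argument and the resulting Palais--Smale sequence $(u_n,s_n)$ involve \emph{arbitrary} points of $S_{\mu,R}\times I$, whose supports may touch $\partial B_R$; there is no admissible neighbourhood in $s$ at such points. Moreover, even formally, $\partial_s\widetilde E_R=0$ produces the \emph{free} Pohozaev relation \eqref{eqpohozaev}, whereas genuine critical points of $E_R$ on $B_R$ satisfy \eqref{eqpohozaevonBR}, which carries the extra boundary term $\frac R2\int_{\partial B_R}|\partial u/\partial n|^2d\sigma$; so the augmented functional's "natural constraint" is not the one satisfied by the solutions you are after. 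The paper circumvents all of this with the genuine monotonicity trick in the nonlinearity parameter: it introduces $E_{R,\tau}(u)=\frac12\int(|\nabla u|^2+V|u|^2)-\frac{\tau}{p+1}\int|u|^{p+1}$, shows the mountain-pass geometry is uniform for $\tau\in[1-\varepsilon_0,1]$ (Lemma \ref{lemmpsonBR}), obtains for a.e.\ $\tau$ a bounded Palais--Smale sequence and hence an \emph{exact} positive critical point of $E_{R,\tau}$ at level $m_{\mu,R,\tau}$ (Lemma \ref{lemappropro}), and then lets $\tau_n\to1^-$: the exact Pohozaev identity \eqref{eqpohozaevonBR}, whose boundary term has a favourable sign, together with $m_{\mu,R,\tau_n}\to m_{\mu,R}$, gives the uniform $H^1_0(B_R)$ bound, and Lemma \ref{lemcompactonBR} concludes. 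You should replace the dilation argument by this parameter-perturbation argument (no scaling is needed anywhere). Finally, positivity is obtained in the paper not through a cone-preserving gradient flow (which is not obviously available for the constrained flow) but simply from $E_{R,\tau}(|u|)=E_{R,\tau}(u)$ and the continuity of $u\mapsto|u|$, followed by the strong maximum principle.
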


\begin{theorem} \label{thmasymonBR}
	Let $u_{1,R}, u_{2,R}$ be the critical points of $E_R$ constrained on $S_{\mu,R}$ given by Theorem \ref{thmmultionBR}. For $i = 1,2$, there
	exists $\la_{i,R} = \la(u_{i,R})$ such that
	\begin{align*}
		-\Delta u_{i,R} + Vu_{i,R} +\la_{i,R} u_{i,R} = |u_{i,R}|^{p-1}u_{i,R} \quad \text{in} \ B_R,
	\end{align*}
	with $\la_{i,R} > - \La_{R}$, where $\La_{R}$ is the first eigenvalue of $-\Delta + V$ on $B_R$ with Dirichlet boundary condition. In addition, $u_{i,R} \in C^2(\overline{B_R})$, $u_{i,R} >0$ on $B_R$ and $u_{i,R}(x_1,x_2.x_3)$ is radially symmetric and decreasing w.r.t. $(x_1,x_2)$ and w.r.t. $x_3$.
	
\end{theorem}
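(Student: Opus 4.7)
The plan is to split the statement into three essentially independent blocks and handle them in order: (i) the Euler--Lagrange equation and regularity, (ii) the strict positivity together with the spectral bound on $\la_{i,R}$, and (iii) the symmetry and monotonicity. For (i), since $u_{i,R}$ is a constrained critical point of $E_R$ on $S_{\mu,R}$, the Lagrange multiplier rule directly produces $\la_{i,R}\in\R$ such that the stated PDE holds weakly; a standard bootstrap (Brezis--Kato to raise the integrability of $|u_{i,R}|^{p-1}u_{i,R}$ to every $L^q$, $q<\infty$, followed by Schauder estimates up to the boundary) promotes $u_{i,R}$ to $C^2(\overline{B_R})$.

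For (ii), starting from $u_{i,R}\geq 0$, $u_{i,R}\not\equiv 0$ supplied by Theorem \ref{thmmultionBR}, the strong maximum principle applied to $-\Delta u_{i,R}=(|u_{i,R}|^{p-1}-V-\la_{i,R})u_{i,R}$ rules out interior zeros and gives $u_{i,R}>0$ in $B_R$. For the spectral lower bound, testing the PDE against the positive first Dirichlet eigenfunction $\phi_R$ of $-\Delta+V$ on $B_R$ (associated with $\La_R$), integrating by parts, and using the eigenvalue equation for $\phi_R$ yields
\begin{align*}
(\La_R+\la_{i,R})\int_{B_R}u_{i,R}\phi_R\,dx=\int_{B_R}u_{i,R}^{p}\phi_R\,dx;
\end{align*}
both sides are strictly positive because $u_{i,R},\phi_R>0$, hence $\la_{i,R}>-\La_R$.

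For (iii), rewrite the PDE as $-\Delta u_{i,R}=f(x,u_{i,R})$ with $f(x,s):=-V(x)s-\la_{i,R}s+|s|^{p-1}s$, and observe that $f(\cdot,s)$ inherits the symmetries of $V=x_1^2+x_2^2$: it is invariant under $x_3\mapsto -x_3$ and under every reflection across a plane containing the $x_3$-axis, while $\partial_{x_1}f(x,s)=-2x_1 s<0$ for $x_1>0$ and $s>0$ (and analogously for every direction orthogonal to the $x_3$-axis). Together with $u_{i,R}>0$ in $B_R$ and the Dirichlet condition, this places us exactly in the setting of Gidas--Ni--Nirenberg / Berestycki--Nirenberg. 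Applying moving planes to $\{x_3=0\}$ gives evenness in $x_3$ and strict decrease in $|x_3|$; applying it to every plane through the $x_3$-axis gives axial symmetry about that axis; and applying it to $\{x_1=0\}$ in the slice $\{x_2=0\}$ gives strict radial decrease in $\rho=\sqrt{x_1^2+x_2^2}$. The only delicate point is verifying the hypotheses of the moving-plane theorem for the specific $V$ and $\la_{i,R}$ at hand; this reduces to the two invariance properties above and is immediate, and the argument applies uniformly to $i=1,2$.
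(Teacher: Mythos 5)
Your proposal is correct and follows essentially the same route as the paper: Lagrange multiplier rule plus elliptic bootstrap for the equation and the $C^2(\overline{B_R})$ regularity, the strong maximum principle for positivity, and the Gidas--Ni--Nirenberg moving-plane result (exploiting that $V=x_1^2+x_2^2$ is invariant under rotations about the $x_3$-axis, nonincreasing toward it, and independent of $x_3$) for the symmetry and monotonicity. The only place you go beyond the paper's text is in spelling out the eigenfunction-testing identity $(\La_R+\la_{i,R})\int_{B_R}u_{i,R}\phi_R\,dx=\int_{B_R}u_{i,R}^{p}\phi_R\,dx$ behind the bound $\la_{i,R}>-\La_R$, which the paper simply asserts via the nonexistence of positive solutions for $\la\le-\La_R$; your computation is precisely the standard justification of that assertion.
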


To prove Theorem \ref{thmmultionBR}, we notice that
%Theorems \ref{thmmultionBR} and \ref{thmasymonBR} can be proved by standard methods. In fact, noticing that 
any minimizing sequence in $\G_{\mu,R}$ at the level $c_{\mu,R}$ is bounded (see Lemma \ref{lempropofgmu}) and that any bounded sequence has a strongly convergent subsequence (see Lemma \ref{lemcompactonBR}). Thus we can find $u_{1,R}$ immediately when $\G_{\mu,R}$ is non-empty and the condition $c_{\mu,R} < \inf_{\partial \G_{\mu,R}}E$ holds. The key difficulty to search for $u_{2,R}$ is to establish the existence of a bounded Palais-Smale sequence at the level $m_{\mu,R}$. For this we shall introduce a family of approximating functionals and make use of the monotonicity trick (see Lemma \ref{lemappropro} and the proof of Theorem \ref{thmmultionBR}). As for Theorem \ref{thmasymonBR}, it is readily seen that $u_{1,R}$  and $u_{2,R}$ are positive and regular and that the associated Lagrange multipliers satisfy $\la_{1,R}, \la_{2,R} > - \La_{R}$. Furthermore, a result from  \cite{GNN} allows to deduce that our solutions are radially symmetric and decreasing w.r.t. the $(x_1,x_2)$ variables and $x_3$ variable respectively. We refer to Section \ref{secBR} for more details.

With Theorems \ref{thmmultionBR} and \ref{thmasymonBR} in hand, we turn to the proof of Theorem \ref{thmmulti}.  As we shall see, our proof of Theorem \ref{thmmulti} will ultimately rely on a Liouville type result (Proposition \ref{propnoposisolu}) saying that any non-negative continuous function which is radially symmetric and decreasing w.r.t. $(x_1,x_2)$ and w.r.t. $x_3$  and solution to
\begin{align*}
	-\Delta u + Vu + \lambda u = u^p \quad \text{in } \R^3
\end{align*}
must be the zero function when $\lambda = -\La_0$.  Such Liouville type results when $V \equiv 0$ and $\la = 0$ or when $V$ is harmonic potential and $-\la$ is the first eigenvalue are well known but we could not find a reference in the partial confinement case with $V = x_1^2 + x_2^2$. To prove Proposition \ref{propnoposisolu}, we use reductio ad absurdum. Elaborately choosing a family of test functions depending the parameter $R$ and sending $R$ to infinity, then we analyze the convergent rate to find a contradiction. 

The main lines of the proof of Theorem \ref{thmmulti} are the following. 
Theorem \ref{thmmultionBR} provides a sequence $R_n \to \infty$, together with sequences $\{u_{1,n}\}, \{u_{2,n}\}$ such  that $u_{i,n} \in S_{\mu,R_n}$ is a positive critical point of $E_{R_n}$ constrained on $S_{\mu,R_n}$, $i = 1,2$, and $E_{R_n} (u_{1,n}) = c_{\mu,R_n}$, $E_{R_n} (u_{2,n}) = m_{\mu,R_n}$. Such critical points have the symmetry properties mentioned in Theorem \ref{thmasymonBR}.  Being constituted of critical points, it immediately follows from the Pohozaev identity \eqref{eqpohozaevonBR} that $\{u_{1,n}\}, \{u_{2,n}\}$ are bounded in $H$.

In Lemma \ref{lempropofg}, Lemma \ref{lemmps} and Lemma \ref{lemlimitofcandm}, we prove that $c_{\mu,R} \to c_\mu < 1/2\La_0\mu$ and $m_{\mu,R} \to m_\mu > 1/2\La_0\mu$ as $R \to \infty$. This implies that $E_{R_n} (u_{1,n}) \to c_\mu < 1/2\La_0\mu$ and $E_{R_n} (u_{2,n}) \to m_\mu > 1/2\La_0\mu$ as $n \to \infty$.

A key result is then Theorem \ref{thmcompactness}. It states that if $\{u_n\}$ is a sequence such that $u_n \in S_{\mu,R_n}$ is a positive critical point,  having the symmetry properties stated in Theorem \ref{thmasymonBR}, of $E_{R_n}$ constrained on $S_{\mu,R_n}$, with $R_n \to \infty$ and $E_{R_n}(u_n) \to \beta \neq 1/2\La_0\mu$, then, up to a subsequence, $u_n \to u$ strongly in $H$. Note that a direct application of Theorem \ref{thmcompactness} then guarantees the strong convergence of $\{u_{1,n}\}$ and $\{u_{2,n}\}$ in $H$ and thus ends the proof of Theorem \ref{thmmulti}.

The assumption  $\beta \neq 1/2\La_0\mu$, in Theorem \ref{thmcompactness}, is used to show that the sequence $\{u_n\}$ is non vanishing. Then, the symmetry properties of $u_n$ implies the strong convergence of $\{u_n\}$  in $L^{p+1}(\R^3)$ (Lemma \ref{lemstrongconvergence}).  In view of this last convergence, the convergence in $L^2(\R^3)$ becomes a direct consequence of the Liouville result mentioned above.  We refer to Section \ref{subseccompactness} for more details.

Regarding now Theorem \ref{thmasym}, the estimates \eqref{eqestiofla1} and \eqref{eqestiofu1} can be obtained along the line of \cite[Subsection 4.3]{BBJV} and \cite[Subsection 4.4]{BBJV}. Further, we use some scaling arguments to establish \eqref{eqestiofla2} and \eqref{eqestiofu2}. We point out that \eqref{eqestiofla2} and \eqref{eqestiofu2} were already obtained in  \cite{WW}. The approach of \cite{WW} relies on the clever introduction of some associated unconstrained problems. We provide here alternative proofs.
%a different scaling argument was used in \cite{WW} to study the limit process of the minimizers on Nehari manifold with respect to the parameter $\la \to \infty$ in the case without the mass constraint. 
%Then we can complete the proof of Theorem \ref{thmasym}.

To obtain solutions with a prescribed $L^2$ norm, the strategy of first studying the problem on bounded domains and then passing to the limit has not yet been used much. In this direction, we know only \cite{BaQiZo} and the implementation of this approach for the study of the solutions of \eqref{eqequation} is then one of the major interests of our work. Solving first the problem on balls $B_{R_n}$ gives us two advantages. Firstly, when passing to the limit $R_n \to \infty$, since the sequences $\{u_{1,n}\}, \{u_{2,n}\}$  consist of exact solutions, the Pohozaev identity tells us that these sequences are bounded.  Secondly, since the elements of these sequences have symmetry, this provides us with the compactness property that they strongly converge in $L^{p+1}(\R^3)$.

In the last part of this work we briefly discuss the orbital stability of the solutions obtained in Theorem 1.2. It had already been established in \cite{BBJV} (see also \cite{SY}), that the nonempty set $\{u \in \G_\mu : E(u) = c_\mu\}$ is stable under the flow associated with \eqref{timenls} for $0 < \mu < \mu_1$.
In Remark \ref{stability} we explain how to prove that the standing wave corresponding to a solution at the mountain pass level $m_\mu$ is orbitally unstable for sufficiently small $\mu > 0$. We conjecture that this perturbative instability result should remain true for all $0 < \mu < \mu_1$. A new type of approach will probably have to be developed to establish it.

The paper is organized as follows. In Section \ref{Preliminaries}  we present the key tools, Lemma \ref{lemstrongconvergence}, dealing with the strong convergence in $L^{p+1}(\R^3),$ and the Liouville result, Proposition \ref{propnoposisolu}. We also explicit the geometry of $E$ on $S_{\mu}.$ In Section \ref{secBR} we focus on the problem on $B_R$ with sufficiently large $R$ and provide the proof of Theorem \ref{thmmultionBR} and Theorem \ref{thmasymonBR}. Section \ref{secR3} is devoted to the proof of Theorem \ref{thmmulti} and Theorem \ref{thmasym}. In Subsection \ref{subseccompactness}, we prove the compactness result Theorem \ref{thmcompactness}. Then, we complete the proof of Theorem \ref{thmmulti} and Theorem \ref{thmasym} in Subsection \ref{subsecproof1} and in Subsection \ref{subsecproof2} respectively. Remark \ref{stability} is finally given.

\section{Preliminaries} \label{Preliminaries} 

The results presented in this section will prepare the proof of Theorem \ref{thmmulti}. The following proposition is a particular case of \cite[Théorème III.2]{Li}. It shows how symmetry properties translate into compactness properties.
\begin{proposition} \label{propsymtocom}
	Let $2 \leq d < N$. Assume that $\sup \|u_n\|_{H^1(\R^N)} < \infty$ and that each $u_n$ is radially symmetric w.r.t. $(x_1,\cdots,x_d)$ and decreasing w.r.t. $|x_i|$, $i = d+1,\cdots,N$, then
	\begin{align*}
		u_n \rightharpoonup u  \quad \text{in } H^1(\R^N) \, \Longrightarrow \, u_n \to u  \quad \text{in } L^q(\R^N) \text{ for any } 2 < q < 2_N^* := \frac{2N}{N-2}.
	\end{align*}
\end{proposition}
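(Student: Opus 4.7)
The plan is to deduce the strong $L^q(\R^N)$-convergence from Rellich--Kondrachov-type local compactness combined with uniform tightness at infinity; the tightness will come from pointwise decay estimates forced by the partial radial symmetry in $x' := (x_1, \ldots, x_d)$ and by the monotonicity in $x_{d+1}, \ldots, x_N$.

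First, by Rellich--Kondrachov on each ball $B_R \subset \R^N$ and a diagonal extraction, we may assume (up to a subsequence) that $u_n \to u$ in $L^q_{\mathrm{loc}}(\R^N)$ and a.e. on $\R^N$ for every $q \in [1, 2_N^*)$. It therefore suffices to prove the uniform tightness
\[
\lim_{R \to \infty} \sup_n \int_{|x| > R} |u_n|^q\, dx = 0,
\]
since together with local strong convergence this yields $u_n \to u$ in $L^q(\R^N)$.

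The symmetry enters in two complementary ways. As $d \geq 2$ and $x' \mapsto u_n(x', x'')$ is radial on $\R^d$ for a.e. $x'' \in \R^{N-d}$, the classical Strauss radial lemma gives
\[
|u_n(x', x'')| \leq C\, |x'|^{-(d-1)/2}\, \|u_n(\cdot, x'')\|_{H^1(\R^d)} \quad \text{for } |x'| \geq 1.
\]
Meanwhile, the monotonicity of $u_n$ in each $|x_i|$ with $i > d$ yields, by comparison with the slice of radius $|x_i|$,
\[
2|x_i|\, u_n(x)^2 \leq \int_{-|x_i|}^{|x_i|} u_n(x_1, \ldots, t, \ldots, x_N)^2\, dt,
\]
so that $|u_n|^2$ decays in each $|x_i|$ controlled by a one-dimensional slice $L^2$-norm. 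Splitting $\{|x|>R\} \subset \{|x'|>R/\sqrt{2}\} \cup \{|x''|>R/\sqrt{2}\}$ and handling each piece separately: on $\{|x'|>R/\sqrt{2}\}$, writing $|u_n|^q = |u_n|^{q-2}\cdot |u_n|^2$ and inserting the Strauss bound only into the first factor, one obtains, via the uniform $H^1$- and $L^2$-bounds, a contribution of order $R^{-(q-2)(d-1)/2}$; on $\{|x''|>R/\sqrt{2}\}$ one has $|x_i| \geq R/\sqrt{2(N-d)}$ for some $i > d$, and applying the monotonicity bound to $|u_n|^{q-2}$ together with Sobolev embedding $H^1(\R^N) \hookrightarrow L^{2_N^*}(\R^N)$ and interpolation gives another vanishing contribution.

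The main technical obstacle is to cover the full range $q \in (2, 2_N^*)$: raising the Strauss pointwise bound directly to the $q$-th power and integrating over $|x'| > R$ is finite only for $q > 2d/(d-1)$. The splitting $|u_n|^q = |u_n|^{q-2}|u_n|^2$ circumvents this by absorbing the pointwise decay into the factor $|u_n|^{q-2}$ while pairing $|u_n|^2$ with the uniform $L^2$-bound, which is integrable for every $q \in (2, 2_N^*)$. Combining the resulting tightness with the local strong convergence from the first step completes the proof.
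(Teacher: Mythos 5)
Your overall strategy (local Rellich--Kondrachov compactness plus uniform tightness at infinity obtained from pointwise decay estimates) is exactly the route the paper deliberately avoids: the authors note that Lions' original proof of this statement ``was supposed to be based on certain estimates of decay'' and was never written down, and they instead argue by contradiction, extracting translations $y_n$ from the non-vanishing of $\tilde u_n = u_n - u$, using rotations in $(x_1,\dots,x_d)$ to rule out escape of mass in those variables, and using the monotonicity in $|x_i|$, $i>d$, to rule out escape in the remaining ones. So your approach is genuinely different from the paper's; unfortunately, as written it has a real gap at its central step.

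The gap is this: the Strauss bound you invoke reads $|u_n(x',x'')|\le C|x'|^{-(d-1)/2}\|u_n(\cdot,x'')\|_{H^1(\R^d)}$, and the slice norm $\|u_n(\cdot,x'')\|_{H^1(\R^d)}$ is \emph{not} uniformly bounded in $x''$ or in $n$; the uniform $H^1(\R^N)$ bound only controls its $L^2$ norm with respect to $x''$. Consequently, after the splitting $|u_n|^q=|u_n|^{q-2}|u_n|^2$ and integration in $x'$, what remains is essentially $\int_{\R^{N-d}}\|u_n(\cdot,x'')\|_{H^1(\R^d)}^{q}\,dx''$, a quantity with total exponent $q>2$ in the slice norm, and this is not controlled ``via the uniform $H^1$- and $L^2$-bounds'' as you assert. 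The same defect occurs on $\{|x''|>R/\sqrt2\}$: the monotonicity bound produces $\int_\R u_n^2\,dx_i$, a function of the other $N-1$ variables that is only uniformly in $L^1$, whereas your estimate needs it in $L^{q/2}$. Both halves can in principle be repaired, but only by an additional layer of work you have not supplied (Minkowski's integral inequality in the transverse variables combined with an anisotropic Gagliardo--Nirenberg interpolation on the slices, plus a verification that the resulting exponents close for every $q<2_N^*$ --- they do, but this is not automatic). As it stands, the claimed contribution of order $R^{-(q-2)(d-1)/2}$ is unjustified, and the ``main technical obstacle'' you identify (the restriction $q>2d/(d-1)$ from raising the Strauss bound to the $q$-th power) is not the one that actually threatens the argument.
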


Proposition \ref{propsymtocom} corresponds to the case $n=0$, $m=1$, $N_1=2$ and $p=1$ in \cite[Théorème III.2]{Li}. The proof of \cite[Theorem III.2]{Li}, which was not given, was supposed to be based on certain estimates of decay. We give here a proof that does not use such estimates. 

\begin{proof}[Proof of Proposition \ref{propsymtocom}]
	Let $\tilde u_n = u_n - u \rightharpoonup 0$ in $H^1(\R^N)$. By contradiction, we assume that, up to a subsequence 
	\begin{align*}
	\lim_{n \to \infty}\int_{\R^3}|\tilde u_n|^qdx > \epsilon \quad \text{for some } q \in (2,2_N^*) \text{ and } \epsilon > 0.
	\end{align*}
	Then, it is well known that there exists a sequence $\{y_n\} \subset \R^N$ such that 
	\begin{align*}
	    \tilde u_n(\cdot + y_n) \rightharpoonup \tilde u \neq 0, \quad \text{in } H^1(\R^N).
	\end{align*}
    Denote $y_n = (\bar y_n,\hat y_n)$ where $\bar y_n = (y_{n,1},\cdots,y_{n,d})$ and  $\hat y_n = (y_{n,d+1},\cdots,y_{n,N})$. We claim that $\{\bar y_n\}$ is a bounded sequence in $\R^d$. By negation, we assume that $|\bar y_n| \to \infty$. Take a bounded subset $\Omega$ of $\R^d$ such that $\int_{\Omega \times \R^{N-d}}|\tilde u|^qdx > 2\delta$ for some $\delta > 0$. Then we have
    \begin{align*}
    	\liminf_{n \to \infty}\int_{(\bar y_n+\Omega) \times \R^{N-d}}|\tilde u_n|^qdx = \liminf_{n \to \infty}\int_{\Omega \times \R^{N-d}}|\tilde u_n(\cdot+y_n)|^qdx \geq \int_{\Omega \times \R^{N-d}}|\tilde u|^qdx > 2\delta.
    \end{align*}
    So there exists $M < \infty$ such that $\int_{(\bar y_n+\Omega) \times \R^{N-d}}|\tilde u_n|^qdx > \delta$ for all $n > M$. Since $\Omega$ is bounded in $\R^d$ and $|\bar y_n| \to \infty$, for any $k \in \mathbb{N}$, we can take $M' \geq M$ (depending on $k$) and $\sigma_1,\cdots,\sigma_k$, where $\sigma_i : \R^d \to \R^d$ is a rotation map, $i = 1,\cdots,k,$ such that $\Omega_i \cap \Omega_j = \emptyset$ whenever $i \neq j$ where $\Omega_s = \sigma_s(\bar y_n+\Omega)$, $s=1,\cdots,k.$ It is clear that $\int_{\Omega_i \times \R^{N-d}}|\tilde u_n|^qdx > \delta$ when $n > M'$, $i = 1,\cdots,k,$ and thus $\int_{\R^N}|\tilde u_n|^qdx > k\delta$ when $n > M'$. Taking $k$ large enough such that $k\delta > \sup_n\int_{\R^N}|\tilde u_n|^qdx$ we obtain a contradiction. Thus $\{\bar y_n\}$ is a bounded sequence in $\R^d$. Since $\tilde u_n$ to $0$ weakly in $H^1(\R^N)$, the sequence $\{y_n\}$ is not bounded in $\R^N$ and so $\{\hat y_n\}$ is not bounded in $\R^{N-d}$. Also without loss of generality, we assume that $\bar y_n = 0$ for each $n$ and that $\{y_{n,d+1}\}$ is unbounded.
    Up to a subsequence, we further assume $\tilde u_n(x+y_n) \to \tilde u$ a.e. $x \in \R^N$. Without loss of generality, we assume that $\tilde u_n(y_n) \to \tilde u(0) \neq 0$. From the fact that $u_n$ is decreasing w.r.t. $|x_i|$, $i = d+1,\cdots,N$, we deduce that $u_n \geq 0$ and thus $u \geq 0$. As proved previously, the sequence $\{y_n\}$ is unbounded and so $u(y_n) = o_n(1)$. Then, noticing $u_n(y_n) = \tilde u_n(y_n) + u(y_n) = \tilde u_n(y_n) + o_n(1) \geq 0$, we obtain that $\tilde u(0) > 0$. Take $x^* = (0,\cdots,0,x_{d+1}^*,0,\cdots,0)$ such that $0 \leq u(x^*) < 1/3\tilde u(0)$. So $u_n(x^*) = \tilde u_n(x^*) + u(x^*) < 1/3\tilde u(0) + o_n(1)$. For large $n$ such that $|y_{n,d+1}| > |x_{d+1}^*|$, by the monotonicity of $u_n$ w.r.t. $|x_i|$, $i = d+1,\cdots,N,$ we obtain that $u_n(y_n) \leq u_n(x^*) < 1/3\tilde u(0) + o_n(1)$. However, this contradicts that $u_n(y_n) = \tilde u_n(y_n) + u(y_n) \to \tilde u(0) > 0$ completing the proof.
\end{proof}

%\begin{lemma}[Lemma 3.4 in \cite{BBJV}] \label{lemconcentratecom}
%	Assume that $\sup \|u_n\|_{H} < \infty$ and there exists $\epsilon_0 > 0$ such that
%	\begin{align*}
%		\|u_n\|_{L^{p+1}(\R^3)} > \epsilon_0, \quad \forall n,
%	\end{align*}
%	then for a sequence $\{z_n\} \subset \R$, we have
%	\begin{align*}
%		u_n(x_1,x_2,x_3-z_n) \rightharpoonup u \neq 0, \quad \text{in } H.
%	\end{align*}
%\end{lemma}
%The next lemma shows how symmetry properties translate into a compactness property.

As a direct consequence of Proposition \ref{propsymtocom} we have the following result.
\begin{lemma} \label{lemstrongconvergence}
Assume that $\sup \|u_n\|_{H} < \infty$ and that each $u_n$ is radially symmetric and decreasing w.r.t. $(x_1,x_2)$ and w.r.t. $x_3$ respectively, then
    \begin{align*}
		u_n \rightharpoonup u  \quad \text{in } H \, \Longrightarrow \, u_n \to u  \quad \text{in } L^{p+1}(\R^3).
		\end{align*}
\end{lemma}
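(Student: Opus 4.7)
The plan is to reduce directly to Proposition \ref{propsymtocom} with the parameters $N=3$ and $d=2$. The hypotheses match precisely: each $u_n$ is radially symmetric with respect to the first two variables $(x_1,x_2)$ and decreasing with respect to the remaining variable $x_3$ (the monotonicity in a single coordinate $x_3$ being equivalent to monotonicity in $|x_3|$ for non-negative functions after symmetrization, and our $u_n$ are non-negative since they are monotonically decreasing from a positive value and tend to $0$ at infinity).

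First I would observe that $\|u_n\|_{H^1(\R^3)} \leq \|u_n\|_{H}$ by the very definition of the norm on $H$, since $\|u\|_H^2 = \|u\|_{H^1}^2 + \int_{\R^3}(x_1^2+x_2^2)|u|^2 dx$. Hence the boundedness assumption $\sup_n \|u_n\|_H < \infty$ implies the boundedness hypothesis $\sup_n \|u_n\|_{H^1(\R^3)} < \infty$ required by Proposition \ref{propsymtocom}. Likewise, weak convergence $u_n \rightharpoonup u$ in $H$ yields weak convergence $u_n \rightharpoonup u$ in $H^1(\R^3)$, because the inclusion $H \hookrightarrow H^1(\R^3)$ is continuous and linear (it maps weakly convergent sequences to weakly convergent sequences with the same limit).

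Next I would check that the exponent $p+1$ falls in the admissible range $(2,2_3^*) = (2,6)$. Since $7/3 < p < 5$, we have $10/3 < p+1 < 6$, so indeed $p+1 \in (2,6)$.

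Applying Proposition \ref{propsymtocom} with $N=3$, $d=2$ and $q = p+1$ yields $u_n \to u$ in $L^{p+1}(\R^3)$, which is exactly the conclusion. There is no real obstacle here, as the lemma is stated precisely as a specialization of the already proved Proposition \ref{propsymtocom} to the setting of the space $H$ and the Gagliardo--Nirenberg exponent $p+1$ relevant to the energy functional $E$.
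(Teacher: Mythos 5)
Your proof is correct and matches the paper exactly: the paper presents Lemma \ref{lemstrongconvergence} as an immediate specialization of Proposition \ref{propsymtocom} with $N=3$, $d=2$, $q=p+1\in(10/3,6)$, and your verification of the hypotheses (boundedness and weak convergence in $H^1(\R^3)$ inherited from $H$ via the continuous inclusion, and the admissibility of the exponent) is precisely the routine check being left to the reader.
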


\begin{remark}\label{usesymmetry}
Using, in particular, some of the results of \cite{Li}, many works have exploited the possibility of working in certain subspaces of functions with a certain symmetry in order to benefit from additional compactness properties.  In this direction we mention \cite{BaWi,BeLi,AvPoSc,JeLu,Me}. However, it seems to us that this is the first time that the compact inclusion stated in Lemma \ref{lemstrongconvergence} has been used, and moreover through a process of approximation.
\end{remark}
   
Next, we recall a result comparing the quantities associated with two spectral problems defined respectively in $3d$ and in $2d$.

\begin{lemma}[Lemma 2.1 in \cite{BBJV}] \label{lemspectrum}
	We have the following equality
	$$\La_0 = \la_0,$$
	where
	\begin{align*}
		\La_0 = \inf_{\int_{\R^3}|w|^2dx = 1}\int_{\R^3}\left(|\nabla w|^2 +(x_1^2+x_2^2)|w|^2 \right)dx, 
	\end{align*}
    and
    \begin{align*}
    	\la_0 = \inf_{\int_{\R^2}|v|^2dx_1dx_2 = 1}\int_{\R^2}\left(|\nabla_{x_1,x_2} v|^2 +(x_1^2+x_2^2)|v|^2 \right)dx_1dx_2, 
    \end{align*}
    are the infimum of spectrum defined in 3d and in 2d respectively.
\end{lemma}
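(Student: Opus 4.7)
\medskip

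\noindent\textbf{Proof proposal for Lemma \ref{lemspectrum}.} The plan is to prove the two inequalities $\La_0 \le \la_0$ and $\La_0 \ge \la_0$ separately, using a ``fiber'' decomposition of the $3d$ problem over the $x_3$-direction.

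\medskip

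\noindent\emph{Step 1: $\La_0 \le \la_0$ by a product construction.} Let $\Psi_0 \in H^1(\R^2)$ be the (Gaussian) normalized ground state of the $2d$ harmonic oscillator, so that $\int_{\R^2}|\Psi_0|^2 dx_1dx_2 = 1$ and
\begin{align*}
\int_{\R^2}\bigl(|\nabla_{x_1,x_2}\Psi_0|^2 + (x_1^2+x_2^2)|\Psi_0|^2\bigr)dx_1dx_2 = \la_0.
\end{align*}
I would pick any $\phi \in C_c^\infty(\R)$ with $\int_{\R}|\phi|^2 dx_3 = 1$ and introduce the rescaled family $\phi_n(x_3) := n^{-1/2}\phi(x_3/n)$. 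Then $\|\phi_n\|_{L^2(\R)} = 1$ while $\int_{\R}|\phi_n'|^2 dx_3 = n^{-2}\int_{\R}|\phi'|^2 dx_3 \to 0$. Setting $w_n(x) := \Psi_0(x_1,x_2)\phi_n(x_3)$, a direct Fubini computation gives $\int_{\R^3}|w_n|^2 dx = 1$ and
\begin{align*}
\int_{\R^3}\bigl(|\nabla w_n|^2 + (x_1^2+x_2^2)|w_n|^2\bigr)dx = \la_0 + \int_{\R}|\phi_n'|^2 dx_3 = \la_0 + o(1),
\end{align*}
which yields $\La_0 \le \la_0$.

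\medskip

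\noindent\emph{Step 2: $\La_0 \ge \la_0$ by slicing.} Let $w \in H$ with $\int_{\R^3}|w|^2 dx = 1$ be arbitrary. For almost every fixed $x_3 \in \R$, the slice $w(\cdot,\cdot,x_3)$ lies in the natural $2d$ energy space, so by the variational characterization of $\la_0$,
\begin{align*}
\int_{\R^2}\bigl(|\nabla_{x_1,x_2}w|^2 + (x_1^2+x_2^2)|w|^2\bigr)dx_1dx_2 \ge \la_0 \int_{\R^2}|w|^2 dx_1dx_2.
\end{align*}
Integrating in $x_3$ and discarding the nonnegative contribution $\int_{\R^3}|\partial_{x_3}w|^2 dx$ from $|\nabla w|^2$ yields
\begin{align*}
\int_{\R^3}\bigl(|\nabla w|^2 + (x_1^2+x_2^2)|w|^2\bigr)dx \ge \la_0 \int_{\R^3}|w|^2 dx = \la_0.
\end{align*}
Taking the infimum over admissible $w$ gives $\La_0 \ge \la_0$, and combining with Step~1 concludes the proof.

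\medskip

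\noindent The argument is essentially routine; the only mildly delicate point is ensuring that the slice inequality in Step 2 is applicable for a.e.~$x_3$, which is handled by Fubini together with the fact that the $2d$ Rayleigh quotient defining $\la_0$ extends to all $v \in H^1(\R^2)$ with $\int (x_1^2+x_2^2)|v|^2 < \infty$ (and the quotient is declared $+\infty$ otherwise, which only strengthens the inequality). No compactness or regularity issues arise, so I would not expect any serious obstacle.
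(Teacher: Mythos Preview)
Your proof is correct. The paper does not supply its own proof of this lemma; it is stated as a citation of \cite[Lemma 2.1]{BBJV}, so there is no in-paper argument to compare against. Your two-step approach---product test functions $\Psi_0(x_1,x_2)\phi_n(x_3)$ with vanishing $x_3$-kinetic energy for the upper bound, and the slicewise Rayleigh inequality integrated in $x_3$ for the lower bound---is the standard and natural route, and is essentially the argument one finds in \cite{BBJV}. The remark you make about the a.e.\ applicability of the $2d$ inequality via Fubini is exactly the right justification for the one point that needs care.
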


\begin{remark} \label{rmkef2d}
	In the 2-dimensional case, it is well known that $\la_0 = 2$, it is simple and a corresponding minimizer is given by the gaussian function $e^{-(x_1^2+x_2^2)}$, see \cite[Section 2]{BBJV}.
\end{remark}

The following Liouville type result will be crucially used.

\begin{proposition} \label{propnoposisolu}
	If a non-negative continuous function $u \in H$ is radially symmetric and decreasing w.r.t. $(x_1,x_2)$ and w.r.t. $x_3$ respectively, and, weakly solves
	\begin{align} \label{eqweak}
		-\Delta u + Vu - \La_0 u = u^p \quad \text{in } \R^3,
	\end{align}
    then $u \equiv 0$. 
\end{proposition}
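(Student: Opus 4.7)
My plan is to test the equation against a cutoff of (the trivial $x_3$-extension of) the ground state of the transverse 2D harmonic oscillator. Let $\Psi_0(x_1,x_2)=\pi^{-1/2}e^{-(x_1^2+x_2^2)/2}$ be the $L^2$-normalized positive eigenfunction of $-\Delta_{x_1,x_2}+V$ on $\R^2$ associated with $\La_0=2$, so that $-\Delta\Psi_0+V\Psi_0=\La_0\Psi_0$; since $\Psi_0$ is independent of $x_3$, this identity continues to hold pointwise in $\R^3$. For $R>0$, fix a smooth radial cutoff $\chi_R\in C_c^\infty(\R^3)$ with $\chi_R\equiv 1$ on $B_R$, $\chi_R\equiv 0$ outside $B_{2R}$, and $|\nabla\chi_R|\le C/R$. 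Then $\phi_R:=\chi_R\Psi_0\in C_c^\infty(\R^3)$ is an admissible test function for the weak formulation of \eqref{eqweak}.

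Inserting $\phi_R$, expanding $\nabla\phi_R=\Psi_0\nabla\chi_R+\chi_R\nabla\Psi_0$, integrating by parts the term containing $\chi_R\nabla\Psi_0$, and using the eigenvalue identity $\Delta\Psi_0=(V-\La_0)\Psi_0$, the contributions of $V$ and $\La_0$ cancel exactly and one arrives at
\begin{align*}
\int_{\R^3}\nabla\chi_R\cdot\bigl(\Psi_0\nabla u-u\nabla\Psi_0\bigr)\,dx=\int_{\R^3}u^p\chi_R\Psi_0\,dx.
\end{align*}
The right-hand side is non-negative and monotonically increases to $\int_{\R^3}u^p\Psi_0\,dx\in[0,+\infty]$. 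It therefore suffices to prove that the left-hand side tends to $0$ as $R\to\infty$: this forces $\int_{\R^3}u^p\Psi_0\,dx=0$, and since $u\ge 0$ is continuous and $\Psi_0>0$, we conclude $u\equiv 0$.

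For the left-hand side, Cauchy-Schwarz bounds each summand by $(C/R)\,\|f\|_{L^2(A_R)}\,\|g\|_{L^2(A_R)}$ on the shell $A_R:=\{R\le|x|\le 2R\}$, with $f\in\{u,\nabla u\}$ and $g\in\{\Psi_0,|\nabla\Psi_0|\}$. Since $u\in H^1(\R^3)$, both $\|u\|_{L^2(A_R)}$ and $\|\nabla u\|_{L^2(A_R)}$ are $o(1)$ as $R\to\infty$. The key quantitative estimate is $\|\Psi_0\|_{L^2(A_R)}=O(\sqrt R)$, and similarly for $|\nabla\Psi_0|$: splitting $A_R$ according to whether $x_1^2+x_2^2\ge R^2/2$ or $<R^2/2$, on the first region $\Psi_0$ is exponentially small, while on the second $|x_3|\ge R/\sqrt 2$, so the $x_3$-range has length $O(R)$ while the $(x_1,x_2)$-integral of $\Psi_0^2$ is uniformly bounded. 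Hence each summand is $O(1/R)\cdot o(1)\cdot O(\sqrt R)=o(1)/\sqrt R\to 0$, as desired.

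The main obstacle is precisely this rate analysis: the $1/R$ gain from the cutoff only barely beats the $\sqrt R$ growth of $\|\Psi_0\|_{L^2(A_R)}$, which is caused by the absence of decay of $\Psi_0$ in the $x_3$ direction; the remaining factor $1/\sqrt R$ is just absorbed by the $o(1)$ decay coming from $u\in H^1(\R^3)$. This delicate balance is presumably what the authors refer to when they speak of ``analysing the convergence rate''. Incidentally, the cylindrical symmetry and monotonicity of $u$ do not appear to be needed for this argument.
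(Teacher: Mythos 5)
Your proof is correct, and it takes a genuinely different route from the paper's. Both arguments test the equation against the transverse ground state (you correctly take the Gaussian $e^{-(x_1^2+x_2^2)/2}$ with eigenvalue $\La_0=2$), but they truncate in the $x_3$ direction differently. The paper multiplies the Gaussian by the normalized first Dirichlet eigenfunction $\varphi_R(x_3)=\sqrt{2R/\pi}\,\cos(\pi x_3/2R)$ of $-d^2/dx_3^2$ on $[-R,R]$, reduces everything to a one-dimensional identity for $w(x_3)=\int_{\R^2}u\,e^{-(x_1^2+x_2^2)}dx_1dx_2$ and $v(x_3)=\int_{\R^2}u^p\,e^{-(x_1^2+x_2^2)}dx_1dx_2$, and reaches a contradiction by showing the left-hand side is $o(R^{-5/2})$ while the right-hand side is bounded below by $c\,R^{-3/2-\epsilon}$; that rate comparison uses the monotonicity of $w$ and $v$ in $|x_3|$ (hence the cylindrical symmetry and monotonicity of $u$) together with the continuity of $u$ at the origin to guarantee $v(0)>0$. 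You instead take a crude smooth cutoff at scale $R$ and kill the error by Cauchy--Schwarz: the $O(1/R)$ from $\nabla\chi_R$ against the $O(\sqrt R)$ of $\|\Psi_0\|_{L^2(A_R)}$ leaves $O(R^{-1/2})$, and the $o(1)$ tail decay of $\|u\|_{H^1(A_R)}$ finishes the job. Your identity and estimates check out: the integration by parts is legitimate because $\chi_R\nabla\Psi_0\in C_c^1(\R^3;\R^3)$ and $u\in H^1(\R^3)$, the cancellation of the $V$ and $\La_0$ contributions is exact because $\Psi_0$ is $x_3$-independent, and the bound $\int_{A_R}\Psi_0^2\,dx\le\int_{|x_3|\le 2R}\bigl(\int_{\R^2}\Psi_0^2\,dx_1dx_2\bigr)dx_3=4R$ gives the $O(\sqrt R)$ directly (your splitting of $A_R$ is not even needed). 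What your approach buys is precisely what you observe at the end: the symmetry, monotonicity and continuity hypotheses play no role, so you in fact prove the stronger Liouville statement that \emph{any} non-negative weak solution $u\in H$ of \eqref{eqweak} vanishes; the paper's argument, by contrast, leans on the qualitative structure that happens to be available for the solutions to which the proposition is applied, at the cost of a more delicate rate analysis.
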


%\begin{remark}
%	If a non-negative function $u \in H$ weakly solves \eqref{eqweak}, it is easy to see $u > 0$. Moving planes techniques as in \cite{LN} could likely be used to obtain the radial symmetry and monotonicity properties w.r.t. the $(x_1,x_2)$ variables and $x_3$ variable respectively. Since the main goal of this paper is not on this topic, we leave it as an assumption in Proposition \ref{propnoposisolu} and won't give its proof here.
%\end{remark}

\begin{proof}%[Proof of Proposition \ref{propnoposisolu}]
	Arguing by contradiction, we assume that $u \not\equiv 0$. It is easy to see that $u > 0$. By Lemma \ref{lemspectrum} and Remark \ref{rmkef2d}, we get
	\begin{align*}
		\left( -\Delta_{x_1,x_2} + (x_1^2+x_2^2)\right)e^{-(x_1^2+x_2^2)} = \La_0 e^{-(x_1^2+x_2^2)} \quad \text{in } \R^2.
	\end{align*}
    Let us define
    \begin{align*}
    	\varphi_R(x_3) :=
    	\begin{cases}
    		\sqrt{\frac{2R}{\pi}}\cos(\frac{\pi}{2R}x_3), & |x_3| < R, \\
    		0, & |x_3| \geq R.
    	\end{cases}
    \end{align*}
    It is clear that
    \begin{align*}
    	-\varphi_R''(x_3) = \frac{\pi^2}{4R^2}\varphi_R(x_3) \quad \text{in } \bigl\{|x_3| < R\bigr\}.
    \end{align*}
    Note that $e^{-(x_1^2+x_2^2)}\varphi_R(x_3) \in H$. Since $u$ weakly solves \eqref{eqweak}, we get
    \begin{align*}
    	& \int_{\R^3}\left( \nabla u \nabla e^{-(x_1^2+x_2^2)}\varphi_R(x_3) + Vue^{-(x_1^2+x_2^2)}\varphi_R(x_3) - \La_0 ue^{-(x_1^2+x_2^2)}\varphi_R(x_3)\right) dx \nonumber \\
    	= & \int_{\R^3}u^pe^{-(x_1^2+x_2^2)}\varphi_R(x_3)dx.
    \end{align*}
    We set
    \begin{align*}
    	& w(x_3) := \int_{\R^2}u(x_1,x_2,x_3)e^{-(x_1^2+x_2^2)}dx_1dx_2 \geq 0, \\
    	& v(x_3) := \int_{\R^2}u^p(x_1,x_2,x_3)e^{-(x_1^2+x_2^2)}dx_1dx_2 \geq 0.
    \end{align*}
     Since $u \in H$ is radially symmetric and decreasing w.r.t. $(x_1,x_2)$ and w.r.t. $x_3$ respectively, both $w$ and $v$ are symmetric and decreasing w.r.t. $x_3$. Direct calculations yield
     \begin{align*}
     	& \int_{\R^3}\left( \nabla u \nabla e^{-(x_1^2+x_2^2)}\varphi_R(x_3) + Vue^{-(x_1^2+x_2^2)}\varphi_R(x_3) - \La_0 ue^{-(x_1^2+x_2^2)}\varphi_R(x_3)\right) dx \nonumber \\
     	= & \frac{\pi^2}{4R^2}\int_{-R}^R w(x_3)\varphi_R(x_3)dx_3 - \sqrt{\frac{2\pi}{R}}w(R),
     \end{align*}
     and
     \begin{align*}
     	\int_{\R^3}u^pe^{-(x_1^2+x_2^2)}\varphi_R(x_3)dx = \int_{-R}^R v(x_3)\varphi_R(x_3)dx_3.
     \end{align*}
     Thus, we have
     \begin{align*}
     	\frac{\pi^2}{4R^2}\int_{-R}^R w(x_3)\varphi_R(x_3)dx_3 = \sqrt{\frac{2\pi}{R}}w(R) + \int_{-R}^R v(x_3)\varphi_R(x_3)dx_3, 
     \end{align*}
     that is, 
     \begin{align}\label{eqestiofw}
     	\left( \frac{\pi}{2R}\right) ^{\frac52}\int_{-\frac\pi2}^{\frac\pi2} w(\frac{2R}{\pi}x_3)\cos(x_3)dx_3 = \sqrt{\frac{2\pi}{R}}w(R) + \sqrt{\frac{\pi}{2R}}\int_{-\frac\pi2}^{\frac\pi2} v(\frac{2R}{\pi}x_3)\cos(x_3)dx_3. 
     \end{align}
     It is easy to check that $w(R) \geq 0$ and 
     $$
     \int_{-\frac\pi2}^{\frac\pi2} w(\frac{2R}{\pi}x_3)\cos(x_3)dx_3 \to 0 \quad \text{as } R \to \infty.
     $$
     From $u > 0$ we deduce that $v(0) > 0$. Moreover, $v$ is continuous at $x_3 = 0$ since $u$ is continuous, and for any $\epsilon > 0$ and for sufficiently large $R$ (depending on $\epsilon$), we have
     \begin{align} \label{eqestiofv}
     	\int_{-\frac\pi2}^{\frac\pi2} v(\frac{2R}{\pi}x_3)\cos(x_3)dx_3 \geq \int_{-R^{-(1+\epsilon)}}^{R^{-(1+\epsilon)}} v(\frac{2R}{\pi}x_3)\cos(x_3)dx_3 \geq v(0)R^{-(1+\epsilon)}.
     \end{align}
     Next, we fix $0 < \epsilon < 1$. Combining \eqref{eqestiofw} and \eqref{eqestiofv}, we conclude that
     \begin{align*}
     	\frac{4R^{1-\epsilon}}{\pi^2}
     	v(0) \leq \int_{-\frac\pi2}^{\frac\pi2} w(\frac{2R}{\pi}x_3)\cos(x_3)dx_3 \xrightarrow{R \to \infty} 0,
     \end{align*}
     which is self-contradictory and completes the proof.
\end{proof}

We end this section with two lemmas which explicit the geometry of the energy functional $E$ on $S_{\mu}$.

\begin{lemma} \label{lempropofg}
	Let $7/3 < p < 5$ and $0 < \mu < \mu_{1}$, where $\mu_1$ is defined in \eqref{eqmu1}. Then $\G_{\mu}$, defined in \eqref{eqG}, is nonempty and 
	$$0 < c_{\mu}< \frac12\La_0\mu < \nu_{\mu},$$
where $c_{\mu}$ and $\nu_{\mu}$ are defined in \eqref{min_boundary}.
\end{lemma}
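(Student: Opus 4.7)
The proof splits into four claims: (i) $\G_\mu \neq \emptyset$; (ii) $c_\mu > 0$; (iii) $c_\mu < \tfrac{1}{2}\La_0\mu$; (iv) $\nu_\mu > \tfrac{1}{2}\La_0\mu$. My plan is to establish (i) and (iii) together via a single test-function family, (ii) by a direct algebraic manipulation, and (iv) by combining a Gagliardo--Nirenberg lower bound with a two-variable linear optimization.

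For (i) and (iii), let $\Psi_0 \in H^1(\R^2)$ be the $L^2$-normalized ground state of $-\Delta_{x_1,x_2} + x_1^2 + x_2^2$ (with eigenvalue $\La_0$ by Lemma \ref{lemspectrum}), fix any $\varphi \in H^1(\R)$ with $\|\varphi\|_{L^2(\R)} = 1$, and put $u_t(x) := \sqrt{\mu}\,\Psi_0(x_1,x_2)\sqrt{t}\,\varphi(t x_3)$ for $t > 0$. A separation-of-variables computation yields $u_t \in S_\mu$,
\[
	\int_{\R^3}(|\nabla u_t|^2 + V|u_t|^2)\, dx = \mu\La_0 + \mu t^2 \|\varphi'\|_{L^2(\R)}^2,
\]
and $\int_{\R^3}|u_t|^{p+1}\, dx = \mu^{(p+1)/2}\,t^{(p-1)/2}\,\|\Psi_0\|_{L^{p+1}(\R^2)}^{p+1}\,\|\varphi\|_{L^{p+1}(\R)}^{p+1}$. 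Since $p > 7/3$, the leading gradient term $\int|\nabla u_t|^2 \to \mu\|\nabla_{x_1,x_2}\Psi_0\|_2^2 > 0$ stays bounded away from zero as $t \to 0^+$ while the $L^{p+1}$ integral vanishes, so $u_t \in \G_\mu$ for sufficiently small $t$. Since $p < 5$, the exponent $(p-1)/2 < 2$, hence $E(u_t) = \tfrac{1}{2}\La_0\mu + O(t^2) - c\,t^{(p-1)/2} < \tfrac{1}{2}\La_0\mu$ for some $c > 0$ and $t$ small, establishing (i) and (iii) simultaneously.

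For (ii), any $u \in \G_\mu$ satisfies $\int|u|^{p+1} < \tfrac{2p+2}{3p-3}\int|\nabla u|^2 \leq \tfrac{2p+2}{3p-3}\int(|\nabla u|^2 + V|u|^2)$; combined with the spectral bound $\int(|\nabla u|^2+V|u|^2) \geq \La_0\mu$,
\[
	E(u) > \Bigl(\tfrac{1}{2} - \tfrac{2}{3p-3}\Bigr)\int(|\nabla u|^2+V|u|^2) = \tfrac{3p-7}{2(3p-3)}\int(|\nabla u|^2+V|u|^2) \geq \tfrac{3p-7}{2(3p-3)}\La_0\mu > 0.
\]

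For (iv), write $K := \int|\nabla u|^2$ and $W := \int V|u|^2$. On $\partial\G_\mu$, the equality $K = \tfrac{3p-3}{2p+2}\int|u|^{p+1}$ rewrites the energy as $E(u) = \alpha K + \tfrac{1}{2}W$ with $\alpha := \tfrac{3p-7}{2(3p-3)} < \tfrac{1}{2}$. Substituting that same equality into \eqref{eqgninequ} and using $p > 7/3$ produces the Gagliardo--Nirenberg lower bound
\[
	K \geq B(\mu) := \bigl(\tfrac{2p+2}{(3p-3)C_{p+1}}\bigr)^{4/(3p-7)} \mu^{-(5-p)/(3p-7)}.
\]
Minimizing the linear functional $\alpha K + \tfrac{1}{2}W$ over the polyhedron $\{K \geq B(\mu),\,W \geq 0,\,K+W \geq \La_0\mu\}$: because $\tfrac{1}{2} > \alpha$, when $B(\mu) \geq \La_0\mu$ the infimum is attained at the vertex $(B(\mu),0)$ with value $\alpha B(\mu)$. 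Requiring $\alpha B(\mu) > \tfrac{1}{2}\La_0\mu$, equivalently $B(\mu) > \tfrac{3p-3}{3p-7}\La_0\mu$, then reduces via straightforward algebra (using the identity $\tfrac{3p-7}{2(p-1)} + \tfrac{2}{p-1} = \tfrac{3}{2}$) to the stated threshold $\mu < \mu_1$ from \eqref{eqmu1}. The main obstacle I anticipate is the bookkeeping in this last step: one must verify that $B(\mu) \geq \La_0\mu$ automatically holds in the regime $\mu < \mu_1$, since otherwise the optimal vertex of the polyhedron shifts to $(\La_0\mu, 0)$ and one obtains only $\alpha\La_0\mu < \tfrac{1}{2}\La_0\mu$, which is too weak.
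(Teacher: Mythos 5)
Your proof is correct and follows essentially the same route as the paper: the same separated test functions $\Psi_0(x_1,x_2)\sqrt{t}\,\varphi(tx_3)$ yield both the nonemptiness of $\G_\mu$ and $c_\mu<\frac12\La_0\mu$, and the same Gagliardo--Nirenberg bound on $\partial\G_\mu$ yields $\nu_\mu\geq \frac{3p-7}{6p-6}B(\mu)$ (your quantitative argument for $c_\mu>0$ is a welcome expansion of the paper's ``easy to check''). The obstacle you anticipate in step (iv) is not one: the constraint $K+W\geq\La_0\mu$ is never needed, since $E(u)\geq\alpha K\geq\alpha B(\mu)$ directly on $\{K\geq B(\mu),\,W\geq 0\}$, and the condition $\alpha B(\mu)>\frac12\La_0\mu$ you then impose is equivalent to $B(\mu)>\frac{3p-3}{3p-7}\La_0\mu$, which already forces $B(\mu)>\La_0\mu$. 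One bookkeeping remark: carrying the algebra through gives a threshold with $\La_0^{\frac{3p-7}{2p-2}}$ in the denominator, whereas \eqref{eqmu1} prints $\La_0^{\frac{2p-2}{3p-7}}$; this appears to be a typo in the paper, and your computation agrees with the paper's own proof.
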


\begin{proof}
	We set 
    $$
    w(x) = \Psi_0(x_1,x_2)\varphi(x_3), \quad \int_{\R}|\varphi|^2dx_3 = \mu,
    $$ 
    where $\Psi_0(x_1,x_2)$ is the unique normalized positive eigenvector of the quantum harmonic oscillator $-\Delta_{x_1,x_2} + x_1^2+x_2^2$ and $\varphi(x_3)$ will be chosen later. It is clear that
    \begin{align*}
    	\int_{\R^3}|\nabla w|^2dx = & \int_{\R^2}|\nabla_{x_1,x_2}\Psi_0(x_1,x_2)|^2dx_1dx_2\int_\R|\varphi(x_3)|^2dx_3 \\
    	& + \int_{\R^2}|\Psi_0(x_1,x_2)|^2dx_1dx_2\int_\R|\varphi'(x_3)|^2dx_3, \\
    	\int_{\R^3}|w|^{p+1}dx = & \int_{\R^2}|\Psi_0(x_1,x_2)|^{p+1}dx_1dx_2\int_\R|\varphi(x_3)|^{p+1}dx_3.
    \end{align*}
     Let $v(x_3) \in H^1(\R)$ with $\int_{\R}|v|^2dx_3 = \mu$ be non-negative and define $v_t := \sqrt{t}v(tx_3)$. Note that
		\begin{align*}
    	& \int_{\R}|v_t|^2dx_3 = \mu, \quad  \int_{\R}|(v_t)'|^2dx = t^2\int_{\R}|v'|^2dx, \quad \mbox{and} \quad \int_{\R}|v_t|^{p+1}dx = t^{\frac{p-1}{2}}\int_{\R}|v|^{p+1}dx.
    \end{align*}
    %\begin{align*}
    %	& \int_{\R}|(v_t)'|^2dx = t^2\int_{\R}|v'|^2dx, \quad \int_{\R}|v_t|^{p+1}dx = t^{\frac{p-1}{2}}\int_{\R}|v|^{p+1}dx.
    %\end{align*}
    By taking $\varphi = v_t$ with $t > 0$ small enough, we have $w \in \G_\mu$ and thus $\G_\mu$ is not empty. Moreover,
    \begin{align*}
    	E(w) = \frac12\La_0\mu & + \frac12\int_{\R^2}|\Psi_0(x_1,x_2)|^2dx_1dx_2\int_\R|\varphi'(x_3)|^2dx_3 \\
    	& - \frac1{p+1}\int_{\R^2}|\Psi_0(x_1,x_2)|^{p+1}dx_1dx_2\int_\R|\varphi(x_3)|^{p+1}dx_3
    \end{align*}
    and thus $E(w) < 1/2\La_0\mu$ since $(p-1)/2 < 2$, which implies that $c_\mu < \frac{1}{2}\La_0\mu$. Now, for any $u \in \partial \G_{\mu}$, by the Gagliardo-Nirenberg inequality \eqref{eqgninequ} we have
    \begin{align*}
    	\int_{\R^3}|\nabla u|^2dx & = \frac{3p-3}{2p+2}\int_{\R^3}|u|^{p+1}dx \\
    	& \leq \mu^{\frac{5-p}4}C_{p+1}\frac{3p-3}{2p+2}\left(\int_{\R^3}|\nabla u|^2dx\right)^{\frac{3p-3}{4}},
    \end{align*}
    implying that
     \begin{align*}
    	\int_{\R^3}|\nabla u|^2dx \geq \left(C_{p+1}\frac{3p-3}{2p+2} \right) ^{-\frac4{3p-7}}\mu^{-\frac{5-p}{3p-7}}.
    \end{align*}
    Then for $0 < \mu < \mu_{1}$, we get using the definition of $\mu_1 >0$, see \eqref{eqmu1},
    \begin{align} \label{eqnumu}
    	\nu_{\mu} & \geq \frac{3p-7}{6p-6}\inf_{u \in \partial \G_{\mu}}\int_{\R^3}|\nabla u|^2dx \nonumber \\
    	& \geq \frac{3p-7}{6p-6}\left(C_{p+1}\frac{3p-3}{2p+2} \right) ^{-\frac4{3p-7}}\mu^{-\frac{5-p}{3p-7}} \nonumber \\
    	& > \frac12\La_0\mu.
    \end{align}
    Finally, it is easy to check that $c_\mu > 0$, by using the definition of $\G_\mu$.
\end{proof}

\begin{lemma} \label{lemmps}
	Let $7/3 < p < 5$ and $0 < \mu < \mu_{1}$, where $\mu_1$ is defined in \eqref {eqmu1}. There exists $0 \leq w_{0} \in \G_{\mu}$ and $0 \leq w_{1} \in S_{\mu} \backslash \G_{\mu}$, having  supports in $B_{R_0} \subset \R^3$, for some $R_0 >0$, such that
	\begin{align}\label{L1}
		m_{\mu} := \inf_{\ga\in \Ga}\sup_{t\in[0,1]}E(\ga(t)) \geq \nu_{\mu} > \max\bigl\{E(w_{0}), E(w_{1})\bigr\}.
	\end{align}
    In particular, $m_{\mu} > 1/2\La_0\mu$. Here $\G_{\mu}$ is defined in \eqref{eqG}, $\Ga$ in \eqref{eqGa} and $\nu_{\mu}$ in \eqref{min_boundary}.
\end{lemma}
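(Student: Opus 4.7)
The plan is to build $w_0$ from the function constructed in Lemma \ref{lempropofg}, cut off to have compact support, and to obtain $w_1$ as a mass-preserving dilation of $w_0$. The inequality \eqref{L1} will then follow from a simple topological barrier argument: every path in $\Ga$ must cross the hypersurface $\partial\G_\mu$, on which the energy is at least $\nu_\mu$.

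I start by recalling that Lemma \ref{lempropofg} furnishes $w \in \G_\mu$ with $E(w) < \frac{1}{2}\La_0\mu$. To localize it, I pick a cutoff $\chi_n \in C_c^\infty(\R^3)$ with $\chi_n \equiv 1$ on $B_n$ and set $\tilde w_n := c_n \chi_n w$, where $c_n > 0$ is chosen so that $\|\tilde w_n\|_{L^2}^2 = \mu$. Then $\tilde w_n \to w$ in $H$ and $c_n \to 1$, so by continuity of $E$ on $H$ and of the functional
\begin{align*}
F(u) := \int_{\R^3}|\nabla u|^2\, dx - \frac{3p-3}{2p+2}\int_{\R^3}|u|^{p+1}\, dx
\end{align*}
on $H^1(\R^3)$, for $n$ large enough one has $\tilde w_n \in \G_\mu$ and $E(\tilde w_n) < \frac{1}{2}\La_0\mu$. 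Fixing such an $n$, I set $w_0 := \tilde w_n$, and $w_0$ has compact support in some ball $B_{R_0}$.

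Next, for $t \geq 1$ I consider the dilation $w_{0,t}(x) := t^{3/2} w_0(tx)$, which preserves the $L^2$-norm and has support in $B_{R_0/t} \subset B_{R_0}$. A direct computation gives
\begin{align*}
E(w_{0,t}) = \frac{t^2}{2}\|\nabla w_0\|_{L^2}^2 + \frac{1}{2t^2}\int_{\R^3}V|w_0|^2\, dx - \frac{t^{3(p-1)/2}}{p+1}\|w_0\|_{L^{p+1}}^{p+1},
\end{align*}
together with the analogous expression $F(w_{0,t}) = t^2 \|\nabla w_0\|_{L^2}^2 - \frac{3p-3}{2p+2}t^{3(p-1)/2}\|w_0\|_{L^{p+1}}^{p+1}$. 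Because $p > 7/3$ forces $3(p-1)/2 > 2$, both quantities tend to $-\infty$ as $t \to \infty$; hence for some sufficiently large $t_1$ the function $w_1 := w_{0,t_1}$ lies in $S_\mu \setminus \G_\mu$ and satisfies $E(w_1) < \frac{1}{2}\La_0\mu$.

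For the mountain pass inequality, $F$ is continuous on $S_\mu$, so for any $\ga \in \Ga$ the real-valued map $s \mapsto F(\ga(s))$ is continuous on $[0,1]$ with $F(\ga(0)) > 0 > F(\ga(1))$. The intermediate value theorem yields $s^* \in (0,1)$ with $\ga(s^*) \in \partial\G_\mu$, whence $\sup_{s\in[0,1]} E(\ga(s)) \geq E(\ga(s^*)) \geq \nu_\mu$. Taking the infimum over $\ga \in \Ga$ gives $m_\mu \geq \nu_\mu$, and Lemma \ref{lempropofg} then supplies $\nu_\mu > \frac{1}{2}\La_0\mu > \max\{E(w_0), E(w_1)\}$, closing \eqref{L1}. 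The only delicate point is preserving strict membership in $\G_\mu$ together with the strict bound $E(w_0) < \frac{1}{2}\La_0\mu$ after the cutoff; the scaling producing $w_1$ and the topological barrier argument are routine.
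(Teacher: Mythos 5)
Your proposal is correct and follows essentially the same route as the paper: take the function from Lemma \ref{lempropofg} as $w_0$, produce $w_1$ by the mass-preserving dilation $t^{3/2}v(tx)$ using $(3p-3)/2>2$, and obtain $m_\mu\geq\nu_\mu$ from the intermediate value theorem applied to the crossing of $\partial\G_\mu$. The only difference is that you carry out the cutoff/renormalization giving $w_0$ compact support explicitly, where the paper simply asserts it "without loss of generality"; this is a welcome but inessential addition.
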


\begin{proof}
	%By Lemma \ref{lemlimitoffirsteig} and definitions of $\mu_{1}, \mu_{1,R}$, we have $\mu_{1,R} \to \mu_{1}$ as $R \to \infty$. Hence, for $0 < \mu < \mu_{1}$, we can take $\rho > 0$ large enough such that $\mu < \mu_{1,\rho}$. It is easy to see that $\G_{\mu,\rho} \subset \G_\mu$ since any function in $H^1_0(B_R)$ can be viewed as a function in $H$ by zero extending and for all $\varphi \in H^1_0(B_R)$
	%\begin{align*}
	%	\int_{B_R}|\nabla \varphi|^2dx = \int_{\R^3}|\nabla \varphi|^2dx, \quad \int_{B_R}|\varphi|^2dx = \int_{\R^3}|\varphi|^2dx, \quad \int_{B_R}|\varphi|^{p+1}dx = \int_{\R^3}|\varphi|^{p+1}dx.
	%\end{align*}
%	By Lemma \ref{lempropofgmu}, $\G_{\mu,\rho}$, also $\G_\mu$, are not empty. 
	
	%we take sufficiently large $\rho > 0$ such that $\sqrt{\mu}\phi_\rho \in \G_{\mu,\rho} \subset \G_\mu$ and $E(\sqrt{\mu}\phi_\rho) < 1/2\La_\rho\mu < \nu_\mu$. Let $w_{0} = \sqrt{\mu}\phi_\rho \in \G_{\mu}$. Obviously, $w_{0} \geq 0$. 
	According to the proof of Lemma \ref{lempropofg}, there exists a non-negative $w \in \G_\mu$ such that $E(w) < 1/2\La_0\mu < \nu_\mu$ for $0 < \mu < \mu_1$. Without loss of generality, we further assume $supp \ w \subset B_{R_0}$ for some $R_0 > 0$. We choose $w_0 = w$. Now, for an arbitrary non-negative $v \in S_{\mu}$ having support in $B_{R_0},$ we define $v_t(x) := t^{3/2}v(tx)$. Clearly $v_t \in S_{\mu}$ and $supp \ v_t \subset B_{R_0}$  for all $t > 1$. By direct calculations
	\begin{align*}
		\int_{\R^3}|\nabla v_t|^2dx = t^2	\int_{\R^3}|\nabla v|^2dx, \quad \int_{\R^3}|v_t|^{p+1}dx = t^{\frac{3p-3}2}\int_{B_R}|v|^{p+1}dx.
	\end{align*}
   Also,
    \begin{align*}
		E_{R,\tau}(v_t) & \leq \frac12\int_{\R^3}(|\nabla v_t|^2 + V|v_t|^2)dx - \frac1{2p+2}\int_{\R^3}|v_t|^{p+1}dx \\
		& = \frac{t^2}2\int_{\R^3}|\nabla v|^2dx + \frac1{2t^2}\int_{\R^3}V|v_t|^2dx - \frac{t^{\frac{3p-3}2}}{2p+2}\int_{\R^3}|v_t|^{p+1}dx.
	\end{align*}
    Since $(3p-3)/2 > 2$, taking $w_{1} = v_t$ for sufficiently large $t$, we obtain $w_{1} \in S_{\mu} \backslash \G_{\mu}$, such that $E(w_{1}) < 1/2\La_0\mu < \nu_{\mu}$.
	Moreover, by continuity, for any $\ga \in \Ga$, there exists $t \in [0,1]$ such that $\ga(t) \in \partial \G_{\mu}$, and thus,
	\begin{align*}
		m_{\mu} = \inf_{\ga\in \Ga}\sup_{t\in[0,1]}E(\ga(t)) \geq \nu_{\mu} > \max\bigl\{E(w_{0}), E(w_{1})\bigr\}.
	\end{align*}
	Finally, by \eqref{eqnumu} we know that $\nu_\mu > 1/2\La_0\mu$ for $0 < \mu < \mu_1$, which completes the proof.
\end{proof}

\section{Problems on $B_R$} \label{secBR} 

Since the embedding $H^1_0(B_R) \hookrightarrow L^q(B_R)$ is compact for all $1 \leq q < 6$, any bounded Palais-Smale sequence of $E_R$ constrained on $S_{\mu,R}$  has a convergent subsequence in $S_{\mu,R}$. Such result is now standard, see, e.g., Proposition 3.1 in \cite{Dov}, but we provide here a proof for completeness.

\begin{lemma} \label{lemcompactonBR}
	Let $1 < p < 5$. Assume that $\{u_n\} \subset S_{\mu,R}$ is a bounded Palais-Smale sequence of $E_R$ constrained on $S_{\mu,R}$. Then there exists $u \in S_{\mu,R}$ such that, up to a subsequence, $u_n \to u$ strongly in $H^1_0(B_R)$.
\end{lemma}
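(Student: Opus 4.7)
The plan is to argue along the classical lines for Palais–Smale sequences on a constrained problem, exploiting that on the bounded domain $B_R$ the embedding $H^1_0(B_R) \hookrightarrow L^q(B_R)$ is compact for every $1\le q<6$ and that the potential $V(x)=x_1^2+x_2^2$ is bounded on $B_R$.

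First, since $\{u_n\}$ is bounded in $H^1_0(B_R)$, there exists $u\in H^1_0(B_R)$ such that, up to a subsequence, $u_n\rightharpoonup u$ weakly in $H^1_0(B_R)$. By Rellich–Kondrachov, $u_n\to u$ strongly in $L^q(B_R)$ for every $q\in[1,6)$. Taking $q=2$ gives $\int_{B_R}|u|^2dx=\mu$, hence $u\in S_{\mu,R}$. Taking $q=p+1$ (valid since $1<p<5$ implies $p+1<6$) gives $u_n\to u$ in $L^{p+1}(B_R)$. Moreover, since $V$ is bounded on $B_R$, strong $L^2$ convergence yields $\int_{B_R}V|u_n|^2dx\to\int_{B_R}V|u|^2dx$.

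Next I extract Lagrange multipliers. The hypothesis that $\{u_n\}$ is Palais–Smale for $E_R$ restricted to $S_{\mu,R}$ means that $\|E_R'(u_n)-\lambda_n u_n\|_{H^{-1}(B_R)}\to 0$ for a sequence of real numbers $\lambda_n$ obtained by projecting $E_R'(u_n)$ onto the normal to $S_{\mu,R}$ at $u_n$; explicitly
\begin{equation*}
\lambda_n=\frac{1}{\mu}\Bigl(-\int_{B_R}|\nabla u_n|^2dx-\int_{B_R}V|u_n|^2dx+\int_{B_R}|u_n|^{p+1}dx\Bigr)+o(1).
\end{equation*}
The boundedness of $\{u_n\}$ in $H^1_0(B_R)$ and the boundedness of $V$ on $B_R$ ensure that $\{\lambda_n\}$ is bounded, so, up to a further subsequence, $\lambda_n\to\lambda\in\R$. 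Passing to the limit in the equation
\begin{equation*}
\int_{B_R}\nabla u_n\cdot\nabla\varphi\,dx+\int_{B_R}Vu_n\varphi\,dx+\lambda_n\int_{B_R}u_n\varphi\,dx-\int_{B_R}|u_n|^{p-1}u_n\varphi\,dx=o(1)
\end{equation*}
for every $\varphi\in H^1_0(B_R)$ (using weak convergence for the first term, strong $L^2$ for the potential and constraint terms, and strong $L^{p+1}$ for the nonlinearity) shows that $u$ solves $-\Delta u+Vu+\lambda u=|u|^{p-1}u$ in $B_R$.

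Finally, to upgrade weak to strong convergence in $H^1_0(B_R)$, I test the approximate equation against $u_n$ itself, obtaining
\begin{equation*}
\int_{B_R}|\nabla u_n|^2dx=-\int_{B_R}V|u_n|^2dx-\lambda_n\mu+\int_{B_R}|u_n|^{p+1}dx+o(1),
\end{equation*}
and testing the limit equation against $u$ yields the analogous identity for $u$. Since all three right-hand-side terms converge by the previous step, we conclude $\|\nabla u_n\|_{L^2}^2\to\|\nabla u\|_{L^2}^2$. Combined with $u_n\rightharpoonup u$, this gives $u_n\to u$ strongly in $H^1_0(B_R)$. No step is really an obstacle here: the only mildly delicate point is extracting and controlling $\lambda_n$, and this is automatic because $B_R$ is bounded and $V$ is bounded on it, so all relevant quantities in the expression for $\lambda_n$ are bounded.
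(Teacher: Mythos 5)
Your proof is correct and follows essentially the same route as the paper: weak convergence plus the compact embedding $H^1_0(B_R)\hookrightarrow L^q(B_R)$, boundedness of the Lagrange multipliers, and then testing the (approximate) equation to upgrade to strong $H^1_0$ convergence. The only cosmetic difference is that the paper concludes by evaluating $\bigl(J_{\lambda_n}(u_n)-A(u)\bigr)[u_n-u]$ directly, whereas you first pass to the limit equation and compare the two energy identities; both are standard and equivalent here.
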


\begin{proof}
	Since $\{u_n\}$ is bounded in $H^1_0(B_R)$, there exists $u \in H^1_0(B_R)$ such that $u_n \rightharpoonup u$ weakly in $H^1_0(B_R)$ as $n \to \infty$. By Sobolev compact embeddings, up to a subsequence, $u_n \to u$ strongly in $L^q(B_R)$ for all $1 \leq q < 6$, so that $u \in S_{\mu,R}$. %For this $u$, we define the quantity
	%$$
	%\la^* = \la^*(u) := -\frac1\mu E_R'(u)[u] = \frac1\mu \int_{B_R}\left(|u|^{p+1} - |\nabla u|^2 - V|u|^2\right) dx.
	%$$
	For any $w \in S_{\mu,R}$ and $\la \in R$, we define the linear functional $J_\la(w) : H^1_0(B_R) \to \R$ by
	$$
	J_\la(w)[v] := \int_{B_R}\left( \nabla w \nabla v + Vwv + \la wv - |w|^{p-1}wv \right) dx.
	$$
	Since $(E_R|_{S_\mu})'(u_n) \to 0$, there exists $\{\la_n\} \subset \R$ such that $J_{\la_n}(u_n) \to 0$ in $H^{-1}(B_R)$ as $n \to \infty$. Since $\{u_n\}$ is bounded in $H^1_0(B_R)$, it is clear that $\la_n$ is bounded. Note that $A(u)(u_n-u) \to 0$ by weak convergence of $u_n$ to $u$ where $A(u) : H^1_0(B_R) \to \R$ is defined by 
	$$
	A(u)[v] := \int_{B_R}\left(\nabla u \nabla v + Vuv \right)dx.
	$$
	Then we have
	\begin{align*}
		o(1) & = \left (J_{\la_n}(u_n)-A(u)\right) [u_n-u] \\
		& = \int_{B_R}\left( |\nabla (u_n-u)|^2 + V|u_n-u|^2\right) dx \\
		& = \int_{B_R}|\nabla (u_n-u)|^2dx,
	\end{align*}
    by the strong convergence in $L^2(B_R)$ and in $L^{p+1}(B_R)$ of $u_n$ to $u$ and by the boundedness of $\la_n$. This concludes the proof.
\end{proof}

\begin{lemma} \label{lempropofgmu}
	Let $7/3 < p < 5$ and $0 < \mu < \mu_{1}$. For $R_0 >0$ given in Lemma \ref{lemmps}, we have that $\G_{\mu,R}$ is nonempty and $0 < c_{\mu,R} < \nu_{\mu}$, for all $R > R_0$. In addition, any minimizing sequence of $E_R$ in $\G_{\mu,R}$ at the level $c_{\mu,R}$ is bounded in $H^1_0(B_R)$.
\end{lemma}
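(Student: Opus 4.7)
The plan is to handle non-emptiness together with the upper bound first, then derive both $c_{\mu,R} > 0$ and the $H^1_0(B_R)$-boundedness of minimizing sequences from a single coercivity estimate on $\G_{\mu,R}$. For the first step, I would take the function $w_0$ supplied by Lemma \ref{lemmps}, which is non-negative, lies in $\G_\mu$, and has support in $B_{R_0}$. For every $R > R_0$ this $w_0$ then sits in $H^1_0(B_R)$, its integrals over $B_R$ coincide with those over $\R^3$, and thus $w_0 \in \G_{\mu,R}$ and $E_R(w_0) = E(w_0)$. Lemma \ref{lemmps} (cf.\ also Lemma \ref{lempropofg}) gives $E(w_0) < \frac12\La_0\mu < \nu_\mu$, whence $\G_{\mu,R} \neq \emptyset$ and $c_{\mu,R} \le E_R(w_0) < \nu_\mu$.

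Second, for any $u \in \G_{\mu,R}$ the defining inequality $\int_{B_R}|u|^{p+1} < \frac{2p+2}{3p-3}\int_{B_R}|\nabla u|^2$ substituted into $E_R$ yields
\begin{equation*}
E_R(u) > \frac{3p-7}{6p-6}\int_{B_R}|\nabla u|^2 + \frac12\int_{B_R}V|u|^2.
\end{equation*}
Since $\frac{3p-7}{6p-6} < \frac12$ for $\frac73 < p < 5$, and extending $u$ by zero places it in $H$ with unchanged $\int|\nabla u|^2$ and $\int V|u|^2$, the variational characterization \eqref{infspectrum} of $\La_0$ implies $\int_{B_R}(|\nabla u|^2 + V|u|^2) \ge \La_0\mu$. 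Combining the two gives
\begin{equation*}
E_R(u) > \frac{3p-7}{6p-6}\La_0\mu > 0,
\end{equation*}
whence $c_{\mu,R} > 0$.

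Finally, the bound $E_R(u) > \frac{3p-7}{6p-6}\int_{B_R}|\nabla u|^2$ from the previous step immediately forces $\int_{B_R}|\nabla u_n|^2$ to remain bounded along any minimizing sequence $\{u_n\}\subset\G_{\mu,R}$ at level $c_{\mu,R}$, and together with the mass constraint $\int_{B_R}|u_n|^2=\mu$ this gives boundedness in $H^1_0(B_R)$. No significant obstacle arises: the whole lemma reduces to inserting the test function from Lemma \ref{lemmps} into the elementary coercivity estimate that follows directly from the definition of $\G_{\mu,R}$.
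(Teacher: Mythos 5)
Your proof is correct and follows essentially the same route as the paper: insert the compactly supported $w_0$ from Lemma \ref{lemmps} to get non-emptiness and $c_{\mu,R} \le E_R(w_0) = E(w_0) < \nu_\mu$, then use the coercivity estimate $E_R(u) > \frac{3p-7}{6p-6}\int_{B_R}|\nabla u|^2 + \frac12\int_{B_R}V|u|^2$ on $\G_{\mu,R}$ for positivity of $c_{\mu,R}$ and boundedness of minimizing sequences. The only difference is that you make the positivity quantitative via $\La_0$ where the paper simply asserts it; this is a welcome extra detail, not a different argument.
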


\begin{proof}
	%Let $\phi_R$ be the positive first eigenfunction with $\int_{B_R}|\phi_R|^2dx = 1$ with respect to the first eigenvalue $\La_{R}$. For $0 < \mu < \mu_{1,R}$, by Gagliardo-Nirenberg inequality \eqref{eqgnineuqonBR} we have
	%\begin{align*}
	%	\frac{3p-3}{2p+2}\int_{B_R}|\sqrt{\mu}\phi_R|^{p+1}dx & \leq \mu^{\frac p2}C_{p+1}\frac{3p-3}{2p+2}\left(\int_{B_R}|\nabla \phi_R|^2dx\right)^{\frac{3p-3}{4}} \\
	%	& < \mu^{\frac {p-2}2}C_{p+1}\La_R^{\frac{3p-7}{4}}\frac{3p-3}{2p+2}\int_{B_R}|\nabla \sqrt{\mu}\phi_R|^2dx \\
	%	& < \int_{B_R}|\nabla \sqrt{\mu}\phi_R|^2dx,
	%\end{align*}
    %implying $\sqrt{\mu}\phi_R \in \G_{\mu,R}$ and thus $\G_{\mu,R}$ is nonempty.
		Let $0 \leq w_0 \in \G_\mu$ be given by Lemma \ref{lemmps}. For all $R > R_0$, $w_0 \in \G_{\mu,R}$ and thus $\G_{\mu,R}$ is not empty. 
		From \eqref{L1}, we have
		\begin{align*} 
    	%\nu_{\mu,R} & \geq \frac{3p-7}{6p-6}\inf_{u \in \partial \G_{\mu,R}}\int_{B_R}|\nabla u|^2dx \\
    %	& \geq \frac{3p-7}{6p-6}\left(C_{p+1}\frac{3p-3}{2p+2} \right) ^{-\frac4{3p-7}}\mu^{-\frac{5-p}{3p-7}} \\
    	%& > \frac12\mu \La_0
    	\nu_{\mu} > E(w_0) = E_R(w_0) \geq c_{\mu,R}.
    \end{align*}
    Clearly also $c_{\mu,R} > 0$.
    %For any $u \in \partial \G_{\mu,R}$, by Gagliardo-Nirenberg inequality we have
    %\begin{align*}
    %	\int_{B_R}|\nabla u|^2dx & = \frac{3p-3}{2p+2}\int_{B_R}|u|^{p+1}dx \\
    %	& \leq \mu^{\frac{5-p}4}C_{p+1}\frac{3p-3}{2p+2}\left(\int_{B_R}|\nabla u|^2dx\right)^{\frac{3p-3}{4}},
    %\end{align*}
    %%\begin{align*}
    %	\int_{B_R}|\nabla u|^2dx \geq \left(C_{p+1}\frac{3p-3}{2p+2} \right) ^{-\frac4{3p-7}}\mu^{-\frac{5-p}{3p-7}}.
    %\end{align*} 
    Finally, if $\{u_n\} \subset \G_{\mu,R}$ is a minimizing sequence of $E_R$ in $\G_{\mu,R}$, we have
    \begin{align*}
    	\int_{B_R}|\nabla u_n|^2dx < \frac{6p-6}{3p-7}E_R(u_n) \to \frac{6p-6}{3p-7}c_{\mu,R} \quad \text{as } n \to \infty,
    \end{align*}
    yielding the boundedness of $\{u_n\}$ in $H^1_0(B_R)$.
\end{proof}

Combining Lemma \ref{lemcompactonBR} and Lemma \ref{lempropofgmu}, we obtain the existence of a ground state immediately. To complete the proof of Theorem \ref{thmmultionBR}, we focus on the existence of a critical point at the level $m_{\mu,R}$. Thanks to Lemma \ref{lemcompactonBR}, the only difficulty is to get a bounded Palais-Smale sequence. This will be done using the monotonicity trick. Following the strategy laid down in \cite{Louis}, we introduce a family of functionals with a parameter $\tau \in [1/2,1]$:
\begin{align*} 
%\label{eqenergywithtau}
	E_{R,\tau}(u) = \frac12\int_{B_R}(|\nabla u|^2 + V|u|^2)dx - \frac\tau{p+1}\int_{B_R}|u|^{p+1}dx, \quad \frac73 < p <5.
\end{align*}

\begin{lemma} \label{lemmpsonBR}
	Let $7/3 < p < 5$ and $0 < \mu < \mu_{1}$. Let $R_0 >0$, $0 \leq w_0$ and $0 \leq w_1$ be given by Lemma \ref{lemmps}. Then, taking  $\varepsilon_0 = \varepsilon_0(\mu) > 0$ sufficiently small, we have that, for all $R >R_0$
	\begin{align}\label{L2}
		m_{\mu,R,\tau} := \inf_{\ga\in \Ga_R}\sup_{t\in[0,1]}E_{R,\tau}(\ga(t)) \geq \nu_{\mu} > \max\bigl\{E_{R,\tau}(w_{0}), E_{R,\tau}(w_{1})\bigr\}, \quad \forall \tau \in [1-\varepsilon_0,1],
	\end{align}
	where $\Ga_R = := \bigl\{\ga \in C([0,1],S_{\mu,R}): \ga(0) = w_{0}, \ga(1) = w_{1}\bigr\}.$
	\end{lemma}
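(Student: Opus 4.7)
The plan is to leverage two simple observations: since $\tau\in[1/2,1]$ one has the pointwise bound $E_{R,\tau}(u)\ge E_R(u)$ for every $u\in H^1_0(B_R)$, because $E_{R,\tau}(u)-E_R(u)=\frac{1-\tau}{p+1}\int_{B_R}|u|^{p+1}dx\ge 0$; and because $w_0,w_1$ have compact support in $B_{R_0}\subset B_R$, when viewed as elements of $H^1_0(B_R)$ they retain all the properties established in Lemma \ref{lemmps}, and moreover the value $E_{R,\tau}(w_i)$ is independent of $R>R_0$.

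For the endpoint bounds I would use the second observation to write, for $i=0,1$ and every $R>R_0$,
\[
E_{R,\tau}(w_i)=E(w_i)+\frac{1-\tau}{p+1}\int_{\R^3}|w_i|^{p+1}dx.
\]
Setting $\delta:=\nu_\mu-\max\{E(w_0),E(w_1)\}$, which is strictly positive by Lemma \ref{lemmps}, and choosing
\[
\varepsilon_0=\varepsilon_0(\mu)>0\quad\text{so small that}\quad \frac{\varepsilon_0}{p+1}\max_{i=0,1}\int_{\R^3}|w_i|^{p+1}\,dx<\delta,
\]
yields $E_{R,\tau}(w_i)<\nu_\mu$ for every $\tau\in[1-\varepsilon_0,1]$ and every $R>R_0$, uniformly in $R$.

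For the mountain pass lower bound, I would fix any $\ga\in\Ga_R$. Extension by zero embeds $\G_{\mu,R}$ into $\G_\mu$ and $\partial\G_{\mu,R}$ into $\partial\G_\mu$. Since $w_0\in\G_{\mu,R}$ and $w_1\in S_{\mu,R}\setminus\G_{\mu,R}$, the continuous function
\[
t\mapsto\int_{B_R}|\nabla\ga(t)|^2\,dx-\tfrac{3p-3}{2p+2}\int_{B_R}|\ga(t)|^{p+1}\,dx
\]
is positive at $t=0$ and non-positive at $t=1$, so by the intermediate value theorem there exists $t^*\in[0,1]$ with $\ga(t^*)\in\partial\G_{\mu,R}$. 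Denoting by $\tilde u$ the zero extension of $\ga(t^*)$ to $\R^3$, one has $\tilde u\in\partial\G_\mu\subset S_\mu$, hence the pointwise inequality $E_{R,\tau}\ge E_R$ together with the definition of $\nu_\mu$ gives
\[
E_{R,\tau}(\ga(t^*))\ge E_R(\ga(t^*))=E(\tilde u)\ge\nu_\mu.
\]
Taking the supremum over $t\in[0,1]$ and then the infimum over $\Ga_R$ yields $m_{\mu,R,\tau}\ge\nu_\mu$, and combining with the endpoint estimate proves \eqref{L2}.

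I do not expect any serious obstacle here; the statement is essentially a perturbation of Lemma \ref{lemmps} in the coupling $\tau$, and the only delicate point is the \emph{uniformity} in $R>R_0$ of the choice of $\varepsilon_0$, which is handled by the fact that $w_0,w_1$ are fixed functions whose supports sit inside $B_{R_0}$, so that their $L^{p+1}(\R^3)$ norms enter as $R$-independent constants.
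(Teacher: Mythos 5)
Your proposal is correct and follows essentially the same route as the paper: both rest on the pointwise inequality $E_{R,\tau}\geq E_R=E$ (after zero extension) together with the fact that every path in $\Ga_R$ must cross $\partial\G_\mu$, and on the $R$-independence of $E_{R,\tau}(w_i)$ for the endpoint bound. The only cosmetic difference is that the paper obtains $m_{\mu,R,\tau}\geq\nu_\mu$ by citing $\Ga_R\subset\Ga$ and the already-proved inequality $m_\mu\geq\nu_\mu$ from Lemma \ref{lemmps}, whereas you re-run the intermediate-value crossing argument directly on $\Ga_R$; your explicit quantification of $\varepsilon_0$ is a welcome sharpening of the paper's ``for $\tau$ sufficiently close to $1$.''
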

	
\begin{proof}
Clearly $\Ga_R \neq \emptyset$ and $\Ga_R \subset \Ga$. Also $E_{\tau}(u)\geq E(u)$ for all $u \in S_{\mu}$ and $0\leq \tau \leq 1$. Thus the inequality $m_{\mu,R,\tau} \geq \nu_{\mu}$ follows from the left inequality in \eqref{L1}. Now, for $\tau \leq 1$ sufficiently close to $1$, the right inequality in \eqref{L1} implies the right one in \eqref{L2}.
\end{proof}

%\begin{remark} \label{rmkindependentofR}
%	In view of above proof, for $R > R_0$, $w_{0,R}$ and $w_{1,R}$ can be chosen independent of $R$.
%\end{remark}

\begin{lemma} \label{lemappropro}
	Let $7/3 < p < 5$, $0 < \mu < \mu_{1}$ and $R_0 >0$ be given by Lemma \ref{lemmps}.
	There exists $\varepsilon_0 = \varepsilon_0(\mu) > 0$ such that for all $R > R_0$ and for almost every $\tau \in [1-\varepsilon_0,1]$, $E_{R,\tau}$ constrained to $S_{\mu,R}$ has a positive critical point at the level $m_{\mu,R,\tau}$.
\end{lemma}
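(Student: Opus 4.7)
The plan is to apply the monotonicity trick of \cite{Louis} to the one-parameter family $\{E_{R,\tau}\}_{\tau\in[1-\varepsilon_0,1]}$. First I would check that $\tau\mapsto m_{\mu,R,\tau}$ is non-increasing on $[1-\varepsilon_0,1]$: for $\tau_1\leq\tau_2$ and every $\ga\in\Ga_R$, the pointwise inequality $E_{R,\tau_1}(\ga(t))\geq E_{R,\tau_2}(\ga(t))$ follows from $\int_{B_R}|u|^{p+1}dx\geq 0$ and the negative coefficient $-\tau/(p+1)$, hence $m_{\mu,R,\tau_1}\geq m_{\mu,R,\tau_2}$. Being monotone, this map is differentiable at almost every $\tau\in[1-\varepsilon_0,1]$, and Lemma \ref{lemmpsonBR} already guarantees that the mountain pass geometry holds for every $\tau$ in that interval.

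At each differentiability point $\tau$, I would run the constrained version of the argument of \cite{Louis} (now standard in the normalized-solution literature). Pick a sequence $\tau_n\nearrow\tau$ and near-optimal paths $\ga_n\in\Ga_R$ for $E_{R,\tau_n}$; the differentiability of the minimax level gives a uniform upper bound on the difference quotient $(m_{\mu,R,\tau_n}-m_{\mu,R,\tau})/(\tau-\tau_n)$, which controls $\int_{B_R}|\ga_n(t)|^{p+1}dx$ along the portion of the path lying near the minimax level. Coupled with $E_{R,\tau}(\ga_n(t))\leq m_{\mu,R,\tau}+o_n(1)$ and a pseudo-gradient deformation constructed tangent to $S_{\mu,R}$, this yields a sequence $\{u_n\}\subset S_{\mu,R}$ that is bounded in $H^1_0(B_R)$ and satisfies $E_{R,\tau}(u_n)\to m_{\mu,R,\tau}$ together with $(E_{R,\tau}|_{S_{\mu,R}})'(u_n)\to 0$. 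Lemma \ref{lemcompactonBR}, which applies to $E_{R,\tau}$ verbatim since its nonlinear term is only $\tau$ times that of $E_R$, then gives, up to a subsequence, $u_n\to u$ strongly in $H^1_0(B_R)$, and $u\in S_{\mu,R}$ is a critical point of $E_{R,\tau}$ constrained on $S_{\mu,R}$ at the level $m_{\mu,R,\tau}$.

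For the positivity, I would use that $E_{R,\tau}(|v|)=E_{R,\tau}(v)$ and $\||v|\|_{L^2}=\|v\|_{L^2}$, while $v\mapsto|v|$ is continuous from $H^1_0(B_R)$ into itself. Since $w_0,w_1\geq 0$ by Lemma \ref{lemmps}, replacing any $\ga\in\Ga_R$ by $|\ga|$ keeps it in $\Ga_R$ without changing the sup of $E_{R,\tau}$ along it, so the Palais-Smale sequence above, and therefore its limit $u$, can be chosen non-negative. Because $m_{\mu,R,\tau}\geq\nu_\mu>0$, $u\not\equiv 0$. The Euler--Lagrange equation reads $-\Delta u+(V+\la)u=\tau u^p\geq 0$ in $B_R$ for some Lagrange multiplier $\la\in\R$, and the strong maximum principle, available since $V+\la$ is bounded on the bounded domain $B_R$, gives $u>0$ in $B_R$.

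The main obstacle is the production of the bounded Palais-Smale sequence in the second step: this is exactly what Jeanjean's trick accomplishes, but on the constraint manifold $S_{\mu,R}$ the deformation flow has to be tangent to the sphere so as to preserve the mass, and the quantitative link between the derivative of $\tau\mapsto m_{\mu,R,\tau}$ and the $H^1_0(B_R)$-norm of the candidate sequence must be made carefully. All remaining steps---monotonicity, compactness via Lemma \ref{lemcompactonBR}, and strict positivity via the strong maximum principle---are routine once that bounded Palais-Smale sequence is in hand.
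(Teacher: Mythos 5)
Your proposal is correct and follows essentially the same route as the paper: the paper likewise obtains a bounded Palais--Smale sequence for $E_{R,\tau}|_{S_{\mu,R}}$ at the level $m_{\mu,R,\tau}$ for a.e. $\tau$ via the monotonicity trick of \cite{Louis} (or \cite[Theorem 1.5]{BCJN}) combined with Lemma \ref{lemmpsonBR}, chooses the sequence non-negative using the invariance $E_{R,\tau}(u)=E_{R,\tau}(|u|)$ and the non-negativity of $w_0,w_1$, and concludes with Lemma \ref{lemcompactonBR} and the strong maximum principle. You merely spell out in more detail the mechanism behind the trick that the paper delegates to the cited references.
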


\begin{proof}
	By adapting the monotonicity trick \cite{Louis} on $E_{R,\tau}$ or applying \cite[Theorem 1.5]{BCJN}, together with Lemma \ref{lemmpsonBR}, we deduce that for almost every $\tau \in [1-\varepsilon_0,1]$ there exists a bounded Palais-Smale sequence $\{u_n\}$ for the constrained functional $E_{R,\tau}|_{S_{\mu,R}}$ at level $m_{\mu,R,\tau}$. Since $u \in S_{\mu,R} \Rightarrow |u| \in S_{\mu,R}$, $w_{0} \geq 0$, $w_{1} \geq 0$, the map $u \mapsto |u|$ is continuous, and $E_{R,\tau}(u) = E_{R,\tau}(|u|)$, it is then possible to choose $\{u_n\}$ with the property that $u_n \geq 0$ on $B_R$. Using Lemma \ref{lemcompactonBR} and the strong maximum principle, we get the desired conclusion.
\end{proof}

Now we are prepared to prove Theorem \ref{thmmultionBR}.

\begin{proof}[Proof of Theorem \ref{thmmultionBR}]
	For $0 < \mu < \mu_{1}$, by Lemma \ref{lemcompactonBR} and Lemma \ref{lempropofgmu}, it is standard to prove the existence of a positive critical point $u_{1,R} \in \G_{\mu,R}$ of $E_R$ constrained on $S_{\mu,R}$ with $E_R(u_{1,R}) = c_{\mu,R}$. If $v \in S_{\mu,R}$ is a critical point of $E_R$ constrained on $S_{\mu,R}$, Pohozaev identity yields that $v \in \G_{\mu,R}$ and thus $E_R(v) \geq c_{\mu,R}$. This implies that $u_{1,R}$ is a positive ground state of $E_R$ constrained on $S_{\mu,R}$.
	
	Next, we show the existence of a critical point at the mountain pass level. Lemma \ref{lemappropro} yields the existence of a sequence $\{\tau_n\}$ with $\tau \to 1^-$ as $n \to \infty$ and a sequence $\{u_n\}$ such that $u_n$ is a positive critical point of $E_{R,\tau_n}$ constrained on $S_{\mu,R}$ at the level $m_{\mu,R,\tau_n}$. It is well known see, \cite[Lemma 2.3]{Louis} that $m_{\mu,R,\tau} \to m_{\mu,R,1}$ as $\tau \to 1^-$. By applying the Pohozaev identity \eqref{eqpohozaevonBR} on $u_n$, we get
	\begin{align*}
		\int_{B_R}|\nabla u_n|^2dx > \frac{(3p-3)\tau_n}{2p+2}\int_{B_R}|u_n|^{p+1}dx,
    \end{align*}
    and
    \begin{align*}
    	\frac{3p-7}{6p-6}\int_{B_R}|\nabla u_n|^2dx + \frac12\int_{B_R}V|u_n|^2dx < E_{R,\tau_n}(u_n) = m_{\mu,R,\tau_n} \xrightarrow{n \rightarrow \infty} m_{\mu,R,1},
    \end{align*}
    which implies the boundedness of $u_n$ in $H^1_0(B_R)$. Using Lemma \ref{lemcompactonBR} and the strong maximum principle, we get a positive critical point $u_{2,R}$ of $E_R$ constrained on $S_{\mu,R}$ at the level $m_{\mu,R}$. This complete the proof.
\end{proof}

Finally in this section, we provide the proof of Theorem \ref{thmasymonBR}.

\begin{proof}[Proof of Theorem \ref{thmasymonBR}]
	Let $u_{1,R}$ and $u_{2,R}$ be the positive critical points given by Theorem \ref{thmmultionBR}. For $i = 1,2$, it is standard that there exists the Lagrange multiplier $\la_{i,R} = \la(u_{i,R})$ such that
	\begin{align}\label{L3}
		-\Delta u_{i,R} + Vu_{i,R} +\la_{i,R} u_{i,R} = |u_{i,R}|^{p-1}u_{i,R} \quad \text{in} \ B_R,
	\end{align}
	and $\la_{i,R} > - \La_{R}$ since the above equation has no positive solution in $H^1_0(B_R)$ when $\la \leq -\La_R$. 
	The fact that $u_{i,R} \in C^2(\overline{B_R})$ follows by standard elliptic regularity and that $u_{i,R} >0$ from the strong maximum principle.  To establish the symmetry of our solutions we make use of \cite[Corollary 1]{GNN}. We write \eqref{L3} as
	\begin{align*}
		\Delta u_{i,R} + f((x_1,x_2,x_3), u_{i,R}) = 0  \quad \text{in} \ B_R,
	\end{align*}
	with $f((x_1,x_2,x_3),u) = u^p - (x_1^2 + x_2^3) u - \lambda u.$ Clearly $f$ is invariant with respect to any rotation that leaves invariant the $x_3$ axis. Then from 
	\cite[Corollary 1]{GNN} (note that $(x_1^2 + x_2^2)$ has the good sign), we deduce that the function $u_{i,R}$ is symmetric with respect to any plane containing the $x_3$-axis and decreasing in the normal direction to these planes. Thus it is radial and decreasing with respect to $(x_1,x_2)$. Also, since $f$ does not depend on $x_3$, \cite[Corollary 1]{GNN} implies that $u$ is symmetric (even) with respect to $x_3$ and decreasing along this variable. At this point the proof of the theorem is completed.
\end{proof}

\section{Problems on $\R^3$} \label{secR3}

This section is devoted to the proof of Theorem \ref{thmmulti} and Theorem \ref{thmasym}.

\subsection{A compactness result} \label{subseccompactness}

The main result in this subsection is the following compactness result.

\begin{theorem} \label{thmcompactness}
	Let $\{u_n\}$ be a sequence such that $u_n \in S_{\mu,R_n}$ is a positive critical point of $E_{R_n}$ constrained on $S_{\mu,R_n}$, $R_n \to \infty$, $E_{R_n}(u_n) \to \beta \neq 1/2\La_0\mu$, as $n \to \infty$. Assume in addition, that, for each $n $, $u_{R_n}(x_1,x_2,x_3)$ is radially symmetric and 
	decreasing w.r.t. $(x_1,x_2)$ and w.r.t. $x_3$ respectively. Then, up to a subsequence, $u_n \to u$ strongly in $H$ and $u \in S_\mu$ is a positive critical point of $E$ constrained on $S_\mu$ at the level $\beta$.
\end{theorem}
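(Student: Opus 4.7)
The plan is to combine a Br\'ezis--Lieb decomposition with the symmetry-induced $L^{p+1}$-compactness given by Lemma \ref{lemstrongconvergence} and the Liouville rigidity of Proposition \ref{propnoposisolu}.

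\emph{Step 1: boundedness and weak limits.} Each $u_n$ satisfies the Pohozaev identity \eqref{eqpohozaevonBR}. Discarding the non-negative boundary term and combining with $E_{R_n}(u_n)\to\beta$ yields, for $p>7/3$,
\[
E_{R_n}(u_n)\ \geq\ \frac{3p-7}{6p-6}\int_{B_{R_n}}|\nabla u_n|^2\, dx+\frac{3p+1}{6p-6}\int_{B_{R_n}}V u_n^2\, dx,
\]
so, after extending by zero to $\R^3$, $\{u_n\}$ is bounded in $H$. Along a subsequence $u_n\rightharpoonup u$ in $H$ and, thanks to the prescribed symmetry and monotonicity, Lemma \ref{lemstrongconvergence} delivers $u_n\to u$ in $L^{p+1}(\R^3)$. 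Testing the Euler--Lagrange equation against $u_n$ bounds the Lagrange multipliers $\la_n$, so $\la_n\to\la$ along a further subsequence and the weak limit $u$ solves $-\Delta u+Vu+\la u=u^p$ in $\R^3$.

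\emph{Step 2: $u\not\equiv 0$.} This is where the hypothesis $\beta\neq\tfrac12\La_0\mu$ enters. If $u\equiv 0$, then $\int u_n^{p+1}\to 0$, hence $\int(|\nabla u_n|^2+V u_n^2)\to 2\beta$ and the identity $\int(|\nabla u_n|^2+V u_n^2)+\la_n\mu=\int u_n^{p+1}$ forces $\la_n\to -2\beta/\mu$. The Rayleigh characterization of $\La_0$ yields $2\beta\geq\La_0\mu$, while the bound $\la_n>-\La_{R_n}$ from Theorem \ref{thmasymonBR} together with the standard monotone convergence $\La_{R_n}\to\La_0$ of Dirichlet first eigenvalues gives $-2\beta/\mu\geq-\La_0$. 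The two inequalities combine into $\beta=\tfrac12\La_0\mu$, a contradiction.

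\emph{Step 3: $u\in S_\mu$ via Liouville.} Set $v_n:=u_n-u$ and $\tilde\mu:=\mu-\|u\|_{L^2}^2\in[0,\mu]$. Applying Br\'ezis--Lieb to the $L^2$, gradient and potential-weighted norms, and subtracting the equation tested with $u$ from that tested with $u_n$ (using the strong $L^{p+1}$-convergence to handle the nonlinearity), I would obtain
\[
\int_{\R^3}\bigl(|\nabla v_n|^2+V v_n^2\bigr)\, dx+\la\,\tilde\mu\ =\ o(1).
\]
Rayleigh's inequality applied to $v_n$ then yields $(\la+\La_0)\tilde\mu\leq 0$, while $\la\geq-\La_0$ (inherited from Step 2) gives the opposite sign. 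If $\tilde\mu>0$, these together force $\la=-\La_0$; but then $u$ would be a non-trivial, non-negative, continuous (by elliptic regularity) weak solution of $-\Delta u+Vu-\La_0 u=u^p$ in $\R^3$ inheriting the symmetry and monotonicity of the $u_n$, directly contradicting Proposition \ref{propnoposisolu}. Hence $\tilde\mu=0$, i.e., $u\in S_\mu$, and the displayed identity simultaneously yields $\|v_n\|_H\to 0$. Strong $H$-convergence then gives $E(u)=\beta$, and the strong maximum principle upgrades $u\geq 0$ to $u>0$.

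The main obstacle is Step 3: the translation invariance along $x_3$ blocks any naive concentration-compactness, and it is precisely the Liouville rigidity of Proposition \ref{propnoposisolu}, applied to the weak limit with its preserved symmetry and monotonicity, that rules out the alternative $\la=-\La_0$ in which $L^2$-mass would otherwise be free to escape to infinity along the $x_3$-axis.
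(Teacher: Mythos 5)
Your proposal is correct and follows essentially the same route as the paper: Pohozaev-based boundedness, the non-vanishing step driven by $\beta \neq \tfrac12\La_0\mu$ together with $\La_{R_n}\to\La_0$ and the bound $\la_n>-\La_{R_n}$, then the symmetry-induced $L^{p+1}$-compactness of Lemma \ref{lemstrongconvergence} combined with the Liouville rigidity of Proposition \ref{propnoposisolu} to exclude $\la=-\La_0$ and recover strong $H$-convergence. The only cosmetic difference is that you phrase the last step as a Br\'ezis--Lieb defect computation in $\tilde\mu$, whereas the paper directly passes to the limit in the coercive quadratic form $\int(|\nabla\cdot|^2+V|\cdot|^2+\la^*|\cdot|^2)$; the mechanism is identical.
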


Before proving Theorem \ref{thmcompactness}, we need some preparations.

\begin{lemma} \label{lemlimitoffirsteig}
	As $R \to \infty$, we have $\La_R \to \La_0$.
\end{lemma}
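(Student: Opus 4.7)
The plan is to prove the two inequalities $\La_R \geq \La_0$ and $\limsup_{R\to\infty}\La_R \leq \La_0$ separately.

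First, I would obtain the lower bound by extension by zero. Any $w\in H^1_0(B_R)$, extended by $0$ to all of $\R^3$, lies in $H$ with unchanged $L^2$-norm and unchanged value of $\int_{\R^3}(|\nabla w|^2 + V|w|^2)dx$. The variational characterization \eqref{infspectrum} of $\La_0$ then yields $\La_R \geq \La_0$ for every $R>0$ (and, incidentally, shows that $R\mapsto \La_R$ is nonincreasing).

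For the matching upper bound I would exploit the tensor-product structure identified by Lemma \ref{lemspectrum} and Remark \ref{rmkef2d}. Let $\Psi_0(x_1,x_2)$ be the unique normalized positive ground state of the 2D harmonic oscillator $-\Delta_{x_1,x_2}+x_1^2+x_2^2$, with eigenvalue $\la_0=\La_0$. Given $\varepsilon>0$, choose $\varphi_\varepsilon \in C_c^\infty(\R)$ with $\int_\R|\varphi_\varepsilon|^2dx_3=1$ and $\int_\R|\varphi_\varepsilon'|^2dx_3<\varepsilon$; this is easily arranged by rescaling a fixed normalized bump in the $x_3$-direction. Separation of variables immediately gives
\begin{align*}
\int_{\R^3}\bigl(|\nabla (\Psi_0\varphi_\varepsilon)|^2 + V|\Psi_0\varphi_\varepsilon|^2\bigr)dx = \La_0 + \int_\R|\varphi_\varepsilon'|^2dx_3 < \La_0+\varepsilon,
\end{align*}
while $\int_{\R^3}|\Psi_0\varphi_\varepsilon|^2 dx = 1$.

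The obstacle is that $\Psi_0\varphi_\varepsilon$ does not belong to $H^1_0(B_R)$ for any $R$, since $\Psi_0$ is supported on all of $\R^2$. I would remedy this by multiplying by a smooth radial cutoff $\eta_R \in C_c^\infty(\R^3)$ with $\eta_R \equiv 1$ on $B_{R/2}$, $\eta_R \equiv 0$ outside $B_R$ and $|\nabla \eta_R|\leq C/R$, so that $w_{\varepsilon,R}:=\eta_R\, \Psi_0\varphi_\varepsilon \in H^1_0(B_R)$. Since $\Psi_0$ has explicit Gaussian decay and $\varphi_\varepsilon$ is compactly supported in $x_3$, a routine dominated convergence argument shows that $w_{\varepsilon,R}\to \Psi_0\varphi_\varepsilon$ strongly in $H$ as $R\to\infty$, so the Rayleigh quotient of $w_{\varepsilon,R}$ on $B_R$ converges to a limit strictly below $\La_0+\varepsilon$. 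Consequently $\La_R<\La_0+2\varepsilon$ for all sufficiently large $R$; letting $\varepsilon\to 0^+$ completes the proof. The only step requiring any care is the verification that the cutoff error vanishes as $R\to\infty$; this uses nothing beyond the Gaussian decay of $\Psi_0$ and the compact support of $\varphi_\varepsilon$, and introduces no new difficulty.
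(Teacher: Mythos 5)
Your proof is correct and follows essentially the same route as the paper: the lower bound $\La_R \geq \La_0$ by zero extension (plus monotonicity in $R$), and the upper bound by truncating to $H^1_0(B_R)$ a test function in $H$ whose Rayleigh quotient is within $\varepsilon$ of $\La_0$. The only (cosmetic) difference is that you build that near-minimizer explicitly as the tensor product $\Psi_0(x_1,x_2)\varphi_\varepsilon(x_3)$ via Lemma \ref{lemspectrum}, whereas the paper truncates an arbitrary near-minimizer and phrases the conclusion as a contradiction.
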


\begin{proof}
	For any $0 < R < R' < \infty$, it is standard to show that $\La_R > \La_{R'} > \La_0$. In particular $\lim_{R \to \infty}\La_{R} := \La^*$ exists and $\La^* \geq \La_0$. Let us assume, by contradiction, that $\La^* > \La_0$.
For $\epsilon > 0$ small enough such that $(\La_0 + 2\epsilon)/(1-\epsilon) < \La^*$, we take $\varphi \in H$ with 
	$$
	\int_{\R^3}|\varphi|^2dx = 1 \text{ and } \int_{\R^3}\left( |\nabla \varphi|^2 + V|\varphi|^2dx\right) dx < \La_0 + \epsilon.
    $$
    When $r > 0$ is sufficiently large, there exists $\chi \in C_0^\infty$ such that $\chi \equiv 1$ in $B_r$, $\chi \equiv 0$ in $\R^3 \backslash B_{r+1}$, $|\nabla \chi| \leq C$, and 
    \begin{align*}
    	\int_{\R^3}|\varphi\chi|^2dx := \mu_0 > 1 - \epsilon, \quad\quad \int_{\R^3}\left( |\nabla (\varphi\chi)|^2 + V|\varphi\chi|^2\right) dx \leq \La_0 + 2\epsilon.
    \end{align*}
     At this point, we define
    $$
    \tilde \varphi := \sqrt{\frac{1}{\mu_0}}\varphi\chi \in H^1_0(B_{r+1}).
    $$
    Then, we have 
    $$
    \int_{\R^3}|\tilde \varphi|^2dx = 1 \text{ and } \int_{\R^3}\left( |\nabla \tilde \varphi|^2 + V|\tilde \varphi|^2dx\right) dx \leq \frac{1}{1-\epsilon}\left( \La_0 + 2\epsilon\right) < \La^*,
    $$
    indicating that $\La_{r+1} < \La^*$, which is a contradiction to $\La^* \leq \La_{r+1}$ and completes the proof.
\end{proof}

Now we are prepared to prove the main result in this subsection.

\begin{proof}[Proof of Theorem \ref{thmcompactness}]
	We divide the proof into three steps.
	
	\vskip0.1in
	{\bf Step 1:} $\{u_n\}$ is bounded in $H$.
	
	Since $u_n$ is a critical point of $E_{R_n}$ constrained on $S_{\mu,R_n}$, by the Pohozaev identity \eqref{eqpohozaevonBR} on $B_{R_n}$ we get
	\begin{align*}
		\frac{3p-7}{6p-6}\int_{\R^3}|\nabla u_n|^2dx + \frac12\int_{\R^3}V|u_n|^2dx \leq E(u_n) \to \beta \quad \text{as } n \to \infty,
	\end{align*}
	which yields the boundedness of $\{u_n\} \subset H$.
	
	\vskip0.1in
	{\bf Step 2:} There exists $\epsilon_0 > 0$ such that $\int_{R^3}|u_n|^{p+1}dx > \epsilon_0$ for all $n$.
	
	Since $u_n$ is a critical point of $E_{R_n}$ constrained on $S_{\mu,R_n}$, it is standard that there exists the Lagrange multiplier $\la_n = \la(u_n)$ such that
	\begin{align*}
		-\Delta u_n + Vu_n +\la_n u_n = |u_n|^{p-1}u_n \quad \text{in} \ B_{R_n}.
	\end{align*}
    The boundedness of $\{u_n\}$ in $H$ indicates that $\{\la_n\}$ is bounded. Moreover, it is clear that $\la_n > -\La_{R_n}$. Up to a subsequence, we assume that $\la_n \to \la^*$ for some $\la^*$. By Lemma \ref{lemlimitoffirsteig}, $\la^* \geq -\La_0$. By negation, we suppose $\int_{\R^3}|u_n|^{p+1}dx \to 0$ up to a subsequence. Then, it is easy to see that
    \begin{align*}
    	\int_{\R^3}\left( |\nabla u_n|^2 + V|u_n|^2\right) dx \to -\la^* \mu \leq \La_0\mu \quad \text{as } n \to \infty.
    \end{align*}
    Moreover,
    \begin{align*}
    	\int_{\R^3}\left( |\nabla u_n|^2 + V|u_n|^2\right) dx \geq \La_0\mu.
    \end{align*}
    Thus, $\la^* = -\La_0$ and then $\beta = 1/2 \La_0 \mu$. However, we have assumed that $\beta \neq 1/2 \La_0 \mu$ in Theorem \ref{thmcompactness}. 
    \vskip0.1in
	{\bf Step 3:} Conclusion.
	
	By Step 1, Step 2 and Lemma \ref{lemstrongconvergence}, up to a subsequence, $u_n \rightharpoonup u \neq 0$ weakly in $H$ and $u_n \to u $ strongly in $L^{p+1}(\R^3)$. Moreover, we can assume that %$u_n \to u$ strongly in $L^q_{loc}(\R^3)$ for any $2 \leq q < 6$ and 
	$u_n \to u$ a.e. in $\R^3$. For any $\varphi \in C_0^\infty(\R^3)$, there exists $N_0$, such that, for all $n > N_0$, $supp \ \varphi \subset B_{R_n}$, and thus
	\begin{align*}
		\int_{\R^3}\left( \nabla u_n \nabla \varphi + Vu_n\varphi + \la_n u_n \varphi\right) dx = \int_{\R^3}|u_n|^{p-1}u_n\varphi dx.
	\end{align*}
    By the weak convergence $u_n \rightharpoonup u$ and the convergence $\la_n \to  \la^*$, we get
    \begin{align*}
    	\int_{\R^3}\left( \nabla u \nabla \varphi + Vu\varphi + \la^* u \varphi\right) dx = \int_{\R^3}|u|^{p-1}u\varphi dx, \quad \forall \varphi \in C_0^\infty(\R^3).
    \end{align*}
	Namely, $u$ is a weak solution to
	\begin{align*}
		- \Delta u + Vu + \la^* u = |u|^{p-1}u.
	\end{align*}
    Since $u_n$ is non-negative, $u$ is also non-negative. By strong maximum principle, $u > 0$ in $\R^3$. Further, since $u_n$, is radially symmetric and decreasing w.r.t. $(x_1,x_2)$ and w.r.t. $x_3$ respectively, it is also the case of $u$.
	By the standard elliptic regularity theory, we deduce that $u$ is continuous. Then, recalling that 
	$\la^* \geq \- \La_0$ we get by Proposition \ref{propnoposisolu}, $\la^* > -\La_0$.
	
	It remains to show that $u_n \to u$ in $H$. It is clear that
    \begin{align*}
    	\int_{\R^3}\left( |\nabla u_n|^2 + V|u_n|^2 + \la_n|u_n|^2 \right) dx = \int_{\R^3}|u_n|^{p+1}dx
    \end{align*}
    and
    \begin{align*}
    	\int_{\R^3}\left( |\nabla u|^2 + V|u|^2 + \la^*|u|^2 \right) dx = \int_{\R^3}|u|^{p+1}dx.
    \end{align*}
	Since $\la_n \to \la^*$, we thus have
		 \begin{align*}
		\int_{\R^3}\left( |\nabla u_n|^2 + V|u_n|^2 + \la^*|u_n|^2 \right) dx - \int_{\R^3}|u_n|^{p+1}dx \to \int_{\R^3}\left( |\nabla u|^2 + V|u|^2 + \la^*|u|^2 \right) dx - \int_{\R^3}|u|^{p+1}dx =0.
		 \end{align*}
		Now, using that $u_n \to u$ in $L^{p+1}(\R^3)$, see Lemma \ref{lemstrongconvergence}, necessarily
		\begin{align*}
		\int_{\R^3}\left( |\nabla u_n|^2 + V|u_n|^2 + \la^*|u_n|^2 \right) dx  \to \int_{\R^3}\left( |\nabla u|^2 + V|u|^2 + \la^*|u|^2 \right) dx.
		 \end{align*}
As $\la^* > - \La_0$, this convergence implies that $u_n \to u$ strongly in $H$. The proof is completed.
\end{proof}

\subsection{Proof of Theorem \ref{thmmulti}} \label{subsecproof1}

\begin{lemma} \label{lemlimitofcandm}
	Let $7/3 < p < 5$, $0 < \mu < \mu_{1}$ and $R_0 < R < R' < \infty$, where $R_0>$ is given by Lemma \ref{lemmps}. Then
	\begin{itemize}
		\item[(i)] $c_{\mu,R} > c_{\mu,R'} > c_{\mu}$ and $c_{\mu,R} \to c_\mu$ as $R \to \infty$;
		\item[(ii)] $m_{\mu,R} \geq m_{\mu,R'} \geq m_{\mu}$ and $m_{\mu,R} \to m_\mu$ as $R \to \infty$.
	\end{itemize}
\end{lemma}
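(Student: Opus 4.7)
The plan for part (i) is to establish the monotonicity $c_{\mu,R} > c_{\mu,R'} > c_\mu$ via extension-by-zero and the convergence via a cutoff approximation. First, I would note that extending a function in $H^1_0(B_R)$ by zero yields an element of $H^1_0(B_{R'})$ (or of $H$) with unchanged gradient, potential, and nonlinear energies. Hence $\G_{\mu,R} \hookrightarrow \G_{\mu,R'} \hookrightarrow \G_\mu$ and the weak chain $c_{\mu,R} \geq c_{\mu,R'} \geq c_\mu$ follows immediately. To promote these to strict inequalities, I would use the ground state $u_{1,R}$ from Theorem \ref{thmmultionBR}: by Theorem \ref{thmasymonBR} it is $C^2(\overline{B_R})$ and positive on $B_R$, so by Hopf's lemma $\partial u_{1,R}/\partial \nu < 0$ on $\partial B_R$. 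The zero extension $\tilde u_{1,R}$ therefore has a normal-derivative jump across $\partial B_R$, producing a singular distribution in $-\Delta \tilde u_{1,R}$, so $\tilde u_{1,R}$ is not a critical point of $E_{R'}$ on $S_{\mu,R'}$ (nor of $E$ on $S_\mu$). If equality held in either strict inequality, then since $c_{\mu,R'} \leq c_{\mu,R} < \nu_\mu$ (by Lemma \ref{lempropofg}) forces any minimizer of the corresponding problem to sit in the interior of the respective $\G$-set, $\tilde u_{1,R}$ (resp.\ $\tilde u_{1,R'}$) would have to be a critical point, a contradiction.

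For the convergence $c_{\mu,R} \to c_\mu$, the sequence is monotone decreasing and bounded below by $c_\mu$, so I only need $\limsup_{R \to \infty} c_{\mu,R} \leq c_\mu$. Fix $u \in \G_\mu$, take a smooth cutoff $\chi_R$ with $\chi_R \equiv 1$ on $B_{R-1}$, $\chi_R \equiv 0$ outside $B_R$, $|\nabla \chi_R| \leq C$, and set $v_R := \sqrt{\mu}\,\chi_R u/\|\chi_R u\|_2 \in S_{\mu,R}$. Dominated convergence (applied to each of the three integrals defining $\|\cdot\|_H$) gives $\chi_R u \to u$ in $H$, hence $v_R \to u$ in $H$; since the defining inequality of $\G$ is strict and $u \in \G$, we get $v_R \in \G_{\mu,R}$ for $R$ large, with $E_R(v_R) \to E(u)$. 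Taking the infimum over $u \in \G_\mu$ yields $\limsup c_{\mu,R} \leq c_\mu$.

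For part (ii), the monotonicity $m_{\mu,R} \geq m_{\mu,R'} \geq m_\mu$ follows by the same extension-by-zero principle applied to paths: because $w_0$ and $w_1$ are supported in $B_{R_0} \subset B_R$, any $\gamma \in \Ga_R$ extends into $\Ga_{R'}$ and into $\Ga$, so these sets are nested and the infima invert. For the limit, given $\varepsilon > 0$, pick $\gamma \in \Ga$ with $\sup_t E(\gamma(t)) < m_\mu + \varepsilon$. The image $\gamma([0,1])$ is compact in $H$, hence uniformly tight, so defining $\gamma_R(t) := \sqrt{\mu}\,\chi_R \gamma(t)/\|\chi_R \gamma(t)\|_2$, we obtain $\sup_{t \in [0,1]}\|\gamma_R(t) - \gamma(t)\|_H \to 0$. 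Since $\chi_R w_i = w_i$ for $R-1 > R_0$ and $\|w_i\|_2 = \sqrt{\mu}$, we have $\gamma_R(0) = w_0$, $\gamma_R(1) = w_1$, so $\gamma_R \in \Ga_R$ for $R$ large. By continuity of $E$ on $H$ and uniform convergence, $\sup_t E_R(\gamma_R(t)) \to \sup_t E(\gamma(t))$, giving $\limsup m_{\mu,R} \leq m_\mu + \varepsilon$; letting $\varepsilon \to 0$ and combining with the monotonicity bound closes the argument.

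The step I expect to be the main obstacle is the uniform-in-$t$ approximation used in the $m$-limit: one must transfer a single path $\gamma \in \Ga$ to a path $\gamma_R \in \Ga_R$ while controlling the \emph{supremum} of the energy along the path, not merely pointwise values. This is precisely where the compactness of $\gamma([0,1])$ in $H$, and the resulting uniform tightness, becomes indispensable; without it the pointwise truncation-and-renormalization scheme does not give a usable bound on $\sup_t E_R(\gamma_R(t))$, and the mountain-pass level estimate collapses.
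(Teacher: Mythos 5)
Your proposal is correct and follows essentially the same route as the paper: zero extension gives the nested inclusions and the weak monotonicity, the strict inequalities come from the fact that a minimizer below the boundary level $\nu_\mu$ must be a critical point while a zero extension of a positive solution cannot be one (the paper derives the contradiction from the strong maximum principle rather than your Hopf-lemma flux jump, but the mechanism is the same), and the limits are obtained by truncating a near-optimal competitor, respectively a near-optimal path using the compactness of $[0,1]$. The paper leaves the uniform-in-$t$ truncation for $m_{\mu,R}$ implicit ("reasoning as in the proof of (i)"), so your explicit treatment of that step is a faithful filling-in rather than a departure.
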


\begin{proof}
	For $R_0 < R < R' < \infty$, it is easy to see that $\G_{\mu,R} \subset \G_{\mu,R'} \subset \G_{\mu}$ since any function in $H^1_0(B_R)$ can be viewed as a function in $H^1_0(B_R')$ and also in $H$ by zero extending. % and for all $\varphi \in H^1_0(B_R)$
	%\begin{align*}
	%	\int_{B_R}|\nabla \varphi|^2dx = \int_{\R^3}|\nabla \varphi|^2dx, \quad \int_{B_R}|\varphi|^2dx = \int_{\R^3}|\varphi|^2dx, \quad \int_{B_R}|\varphi|^{p+1}dx = \int_{\R^3}|\varphi|^{p+1}dx.
	%\end{align*}
	%For $0 < R < R' < \infty$, since $\G_{\mu,R} \subset \G_{\mu,R'} \subset \G_{\mu}$,
	Hence, we have $c_{\mu,R} \geq c_{\mu,R'} \geq c_{\mu}$. Next, we prove $c_{\mu,R} > c_{\mu,R'}$. By negation, $c_{\mu,R} = c_{\mu,R'}$. Using Theorem \ref{thmmultionBR}, we obtain a minimizer $u_{1,R} \in \G_{\mu,R}$ such that $E_{R}(u_{1,R}) = c_{\mu,R}$. Thus, $u_{1,R} \in \G_{\mu,R'}$ and $E_{R'}(u_{1,R}) = E_{R}(u_{1,R}) = c_{\mu,R} = c_{\mu,R'}$. It is standard to prove that $u_{1,R} > 0$ in $B_{R'}$, which contradicts to the fact that $u \equiv 0$ in $B_{R'} \backslash B_R$. Therefore, we prove $c_{\mu,R} > c_{\mu,R'}$. Similarly, we have $c_{\mu,R'} > c_{\mu}$. It is clear $\liminf_{R \to \infty}c_{\mu,R} \geq c_{\mu}$. To complete the proof of (i), it suffices to prove $\limsup_{R \to \infty}c_{\mu,R} \leq c_{\mu}$. By negation, $\exists R_n \to \infty$, $\lim_{n \to \infty}c_{\mu,R_n} = c_{\mu}^* > c_{\mu}$. For $\epsilon > 0$ small enough such that $c_{\mu} + \epsilon < c_{\mu}^*$, we take $\varphi \in \G_{\mu}$ with $E(\varphi) \leq c_{\mu} + \epsilon$. Without loss of generality, similar to the proof of Lemma \ref{lemlimitoffirsteig}, we assume that the support of $\varphi$ is compact and $supp \ \varphi \subset B_r$ for sufficiently large $r > 0$. Then $\varphi \in \G_{\mu,r}$ and
	\begin{align*}
		c_{\mu,r} \leq E_r(\varphi) = E(\varphi) \leq c_\mu + \varepsilon < c_\mu^*,
	\end{align*}
    which contradicts to $c_{\mu,r} > c_\mu^*$. 
    
    In the rest of this proof, we will show (ii). Clearly $\Ga_R \subset \Ga_{R'} \subset \Ga$ for $R_0 < R < R' < \infty$, and so $m_{\mu,R} \geq m_{\mu,R'} \geq m_{\mu}$. It is clear $\liminf_{R \to \infty}m_{\mu,R} \geq m_{\mu}$. To complete the proof of (ii), it suffices to prove $\limsup_{R \to \infty}m_{\mu,R} \leq m_{\mu}$. Note that $[0,1]$ is compact. Therefore reasoning as in the proof of (i) we can complete the proof.
\end{proof}

\begin{proof}[Proof of Theorem \ref{thmmulti}]
	Theorem \ref{thmmultionBR} provides a sequence $R_n \to \infty$, together with sequences $\{u_{1,n}\}, \{u_{2,n}\}$ such  that $u_{i,n} \in S_{\mu,R_n}$ is a positive critical point of $E_{R_n}$ constrained on $S_{\mu,R_n}$, $i = 1,2$, and $E_{R_n} (u_{1,n}) = c_{\mu,R_n}$, $E_{R_n} (u_{2,n}) = m_{\mu,R_n}$. In view of Lemma \ref{lempropofg}, Lemma \ref{lemmps} and Lemma \ref{lemlimitofcandm}, as $n \to \infty$, we have $E_{R_n} (u_{1,n}) \to c_\mu < 1/2\La_0\mu$ and $E_{R_n} (u_{2,n}) \to m_\mu > 1/2\La_0\mu$. At this point, by applying Theorem \ref{thmcompactness}, we get two positive critical points $u_1,u_2$ of $E$ constrained on $S_\mu$ with $E(u_1) = c_\mu$ and $E(u_2) = m_\mu$. Similar to the proof of Theorem \ref{thmmultionBR}, the fact $E(u_1) = c_\mu$ implies that $u_1$ is a ground state of $E$ constrained on $S_\mu$.
\end{proof}

\subsection{Proof of Theorem \ref{thmasym}} \label{subsecproof2}

\begin{proof}[Proof of Theorem \ref{thmasym}]

Recall in the proof of Theorem \ref{thmmulti} that $u_{1,n} \to u_1, u_{2,n} \to u_2$ strongly in $H$. By Theorem \ref{thmasymonBR}, $u_{1,n}$ and $u_{2,n}$, also $u_1$ and $u_2$, are radially symmetric and decreasing w.r.t. $(x_1,x_2)$ and w.r.t. $x_3$. For $i = 1,2$, it is standard that there exists Lagrange multiplier $\la_i = \la(u_i)$ such that
\begin{align*}
	-\Delta u_i + Vu_i +\la_i u_i = |u_i|^{p-1}u_i \quad \text{in} \ \mathbb{R}^{3}.
\end{align*}
For $i = 1$, on the one hand, multiplying by $u_1$ and integrating by parts we get
\begin{align*}
	-\la_1 = \mu^{-1}\int_{\R^3}\left(|\nabla u_1|^2 + V|u_1|^2 - |u_1|^{p+1}\right)dx. 
\end{align*}
Using Pohozaev identity \eqref{eqpohozaev}, we obtain
\begin{align*}
	\int_{\R^3}\left(|\nabla u_1|^2 + V|u_1|^2\right)dx < \frac{6p-6}{3p-7}E(u_1) = \frac{6p-6}{3p-7}c_{\mu} < \frac{3p-3}{3p-7}\La_0\mu.
\end{align*}
Then, by the Gagliardo-Nirenberg inequality \eqref{eqgninequ}, we have
\begin{align*}
	-\la_1 & \geq \mu^{-1}\left(\int_{\R^3}\left(|\nabla u_1|^2 + V|u_1|^2\right)dx - C_{p+1}\mu^{\frac{5-p}{4}}\left( \int_{\R^3}\left(|\nabla u_1|^2 + V|u_1|^2\right)dx\right) ^{\frac{3p-3}{4}} \right) \\
	& = \mu^{-1}\int_{\R^3}\left(|\nabla u_1|^2 + V|u_1|^2\right)dx \left(1- C_{p+1}\mu^{\frac{5-p}{4}}\left( \int_{\R^3}\left(|\nabla u_1|^2 + V|u_1|^2\right)dx\right) ^{\frac{3p-7}{4}} \right) \\
	& > \mu^{-1}\int_{\R^3}\left(|\nabla u_1|^2 + V|u_1|^2\right)dx \left(1- C_{p+1}\mu^{\frac{5-p}{4}}\left( \frac{3p-3}{3p-7}\La_0\mu\right) ^{\frac{3p-7}{4}} \right),
\end{align*}
and when $0 < \mu < C_{p+1}^{-\frac2{p-1}}\left( \frac{3p-3}{3p-7}\La_0\right)^{-\frac{3p-7}{2p-2}}$, we conclude that
\begin{align*}
	\la_1 < -\La_0\left(1- C_{p+1}\mu^{\frac{5-p}{4}}\left( \frac{3p-3}{3p-7}\La_0\mu\right) ^{\frac{3p-7}{4}} \right).
\end{align*}
On the other hand, 
\begin{align*}
	-\la_1 = \mu^{-1}\int_{\R^3}\left(|\nabla u_1|^2 + V|u_1|^2 - |u_1|^{p+1}\right)dx < 2\mu^{-1}E(u_1) = 2\mu^{-1}c_\mu < \La_0,
\end{align*}
which proves \eqref{eqestiofla1}. Repeating the steps in \cite[Subsection 4.4]{BBJV} we obtain \eqref{eqestiofu1}.

Next, in order to show \eqref{eqestiofla2} and \eqref{eqestiofu2}, we use some scaling arguments. The bijection 
$$
T_\mu : u(x) \mapsto \mu^{\frac{2}{3p-7}}u(\mu^{\frac{p-1}{3p-7}}x)
$$
continuously maps any function in $S_\mu$ to $S_1$. Set
\begin{align*}
	E_\mu(u) := \frac12\int_{\R^3}|\nabla u|^2dx + \frac{1}{2}\mu^\frac{4p-4}{3p-7}\int_{\R^3}V|u|^2dx - \frac1{p+1}\int_{\R^3}|u|^{p+1}dx.
\end{align*}
It is clear that
\begin{align*}
	E_\mu(T_\mu u) = \mu^{\frac{5-p}{3p-7}}E(u),
\end{align*}
and so $\widehat u_2 := T_\mu u_2$ is a critical point of $E_\mu$ constrained on $S_1$. Let us show that $\widehat u_2$ lies at a mountain-pass level. It is easy to see that $\G$ is invariant with respect to the map $T_\mu$ and its inverse, i.e., 
$$
u \in \G \Leftrightarrow T_\mu u \in \G.
$$
Hence,
$$
u \in \G_\mu \Leftrightarrow T_\mu u \in \G_1.
$$ 
Since $\Ga$, defined by \eqref{eqGa}, depends on $\mu$, we rename it by $\Ga_\mu$, expressed as
\begin{align*}
	\Gamma_\mu := \bigl\{\gamma \in C([0,1],S_\mu): \ga(0) = w_{\mu,0}, \ga(1) = w_{\mu,1} \bigr\}.
\end{align*}
We also introduce
\begin{align*}
	& m_{1,\mu} := \inf_{\ga\in\Ga_1}\sup_{t \in [0,1]}E_\mu(\ga(t)), \\
	& \nu_{1,\mu} := \inf_{u \in \partial \G_1}E_\mu(u) = \mu^{\frac{5-p}{3p-7}}\nu_\mu.
\end{align*}
%For $\mu = 0$, we define $\nu_{1,0} := \inf_{u \in \partial \G_1}E_0(u)$ where
%\begin{align*}
%	E_0(u) := \frac12\int_{\R^3}|\nabla u|^2dx  - \frac1{p+1}\int_{\R^3}|u|^{p+1}dx.
%\end{align*}
Similar to \eqref{eqnumu} we have $\nu_{1,\mu} \geq \frac{3p-7}{6p-6}\left(C_{p+1}\frac{3p-3}{2p+2} \right) ^{-\frac4{3p-7}}$ for any $0 < \mu < \mu_1$. 
Take $\bar \mu \in (0,\mu_1)$ and recall that $w_{\bar \mu,0}$ and $w_{\bar \mu,1}$ are $w_0$ and $w_1$ given in Lemma \ref{lemmps} with $\mu = \bar \mu$ and 
\begin{align*}
	E(w_{\bar \mu,i}) < 1/2\La_0\bar\mu < \frac{3p-7}{6p-6}\left(C_{p+1}\frac{3p-3}{2p+2} \right) ^{-\frac4{3p-7}}\bar\mu^{-\frac{5-p}{3p-7}}, \quad i =0,1.
\end{align*}
Then, for $0 < \mu < \bar\mu$, we have
\begin{align*}
	E_\mu(T_{\bar\mu}w_{\bar \mu,i}) < E_{\bar \mu}(T_{\bar\mu}w_{\bar \mu,i}) = \bar\mu^{\frac{5-p}{3p-7}}E(w_{\bar \mu,i}) < \frac{3p-7}{6p-6}\left(C_{p+1}\frac{3p-3}{2p+2} \right) ^{-\frac4{3p-7}} \leq \nu_{1, \mu}, \quad i =0,1.
\end{align*}
By direct computation we get,
\begin{align*}
	E_\mu(T_\mu u) < \nu_{1,\mu} \Leftrightarrow E(u) < \nu_\mu.
\end{align*}
Thus $E(T_\mu^{-1} T_{\bar\mu}w_{\bar \mu,i}) < \nu_\mu$, $i=0,1.$ As mentioned previously, the set $\G$ is invariant with respect to the map $T_\mu$ and its inverse, and so $T_\mu^{-1} T_{\bar\mu}w_{\bar \mu,0} \in \G_\mu$ and $T_\mu^{-1} T_{\bar\mu}w_{\bar \mu,1} \in S_\mu \backslash \G_\mu$. Therefore, for $0 < \mu < \bar\mu$, we can take $w_{\mu,i} = T_\mu^{-1}T_{\bar\mu}w_{\bar \mu,i}$, $i=0,1.$ This indicates
$$
\ga \in \Ga_\mu \Leftrightarrow T_\mu \circ \ga \in \Ga_1,
$$
and thus $m_{1,\mu} = \mu^{\frac{5-p}{3p-7}}m_\mu$ when $0 < \mu < \bar\mu$, which shows that $\widehat u_2 = T_\mu u_2$ is a critical point of $E_\mu$ constrained on $S_1$ at the level $m_{1,\mu}$. It is standard to prove $\lim_{\mu \to 0}m_{1,\mu} \to m_{1,0}$ (similar to the proof of Lemma \ref{lemlimitofcandm}). For any sequence $\mu_n \to 0$, we set $u_n := T_{\mu_n} u_2$ and study its strong convergence in $H^1(\R^3)$ up to subsequences. There exists Lagrange multiplier $\omega_n \in \R$ such that
$$
-\Delta u_n + \mu_n^{\frac{4p-4}{3p-7}}Vu_n + \omega_n u_n = u_n^p, \quad u_n > 0, \quad \text{in } \R^3.
$$
Note that $\omega_n = \mu_n^{\frac{2p-2}{3p-7}}\la_2$. We claim that $\la_2$, depending on $\mu$, tends to infinity as $\mu \to 0$, which will be proved in Lemma \ref{lemla2tendstoinfinity} below. Particularly, $\la_2 > 0$ for sufficiently small $\mu > 0$. Hence, $\omega_n > 0$ for $n$ large enough.
From
$$
E_{\mu_n}(u_n) = m_{1,\mu_n} \to m_{1,0} \quad \text{as } n \to \infty,
$$
and $\int_{\R^3}|u_n|^2dx = 1$, we deduce that $u_n$ is bounded in $H^1(\R^3)$. Up to a subsequence, we assume $\omega_n \to \omega^* \geq 0$ and $u_n \rightharpoonup u$ weakly in $H^1(\R^3)$. At this point, let us prove there exists $\epsilon_0 > 0$ such that
\begin{align*}
	\int_{\R^3}|u_n|^{p+1}dx > \epsilon_0, \quad \forall n.
\end{align*}
By negation, we assume $\int_{\R^3}|u_n|^{p+1}dx \to 0$ passing a subsequence if necessary. Noticing that $\omega_n > 0$, we obtain $E_{\mu_n}(u_n) \to 0$, which is impossible since $E_{\mu_n}(u_n) = m_{1,\mu_n} \to m_{1,0} > 0$. Then, since the $u_{n}$ are radially symmetric and decreasing w.r.t. $(x_1,x_2)$ and w.r.t. $x_3$ we know by Lemma \ref{lemstrongconvergence} that $u \neq 0.$ It is easy to check $u \in H^1(\R^3)$ weakly solves
$$
-\Delta u + \omega^* u = u^p \quad \text{in } \R^3,
$$
and then $u > 0$ in $\R^3$. This in turn indicates that $\omega^* > 0$. Since $V(x) \geq 0$ and $\mu_n > 0$, for $n$ large enough we get
$$
-\Delta u_n + \frac12\omega^* u_n \leq u_n^p \quad \text{in } \R^3.
$$
Since $\{u_n\}$ is bounded in $H^1(\R^3)$, it follows from the classical elliptic estimates that $u_n$ exponentially decays uniformly with respect to $n$. Thus $\int_{\R^3}|Vu_n|^2dx$ is bounded and $\mu_n^{4p-4} \int_{\R^3}|Vu_n|^2dx$ tends to $0$. Then, it is standard to prove the strong convergence of $u_n$ to $u$ in $H^1(\R^3)$. Obviously, $\int_{\R^3}|u|^2dx = 1$, thus $\omega^* = \omega_1$ where $\omega_1$ is given by Remark \ref{rmkomega1} and $u = u_{\omega_1}$, the unique positive solution of
$$
-\Delta u + \omega_1 u = u^p \quad \text{in } \R^3, \quad\quad u(x) \to 0 \quad \text{as } |x| \to \infty.
$$
This completes the proof of \eqref{eqestiofla2} and \eqref{eqestiofu2}. 
\end{proof}

\begin{lemma} \label{lemla2tendstoinfinity}
	Let $\la_2$, depending on $\mu$, be given by Theorem \ref{thmasym} as the Lagrange multiplier of $u_2$. Then $\la_2 \to \infty$ as $\mu \to 0$.
\end{lemma}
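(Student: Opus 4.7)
My plan is to exploit the scaled critical points $u_n := T_{\mu_n} u_2 \in S_1$ introduced in the proof of Theorem \ref{thmasym}, together with the uniform lower bound $\la_2 > -\La_0$ inherited from the bounded-domain approximation in the proof of Theorem \ref{thmmulti}. Fix any sequence $\mu_n \to 0$; then $u_n$ satisfies
\begin{align*}
-\Delta u_n + \mu_n^{\frac{4p-4}{3p-7}} V u_n + \omega_n u_n = u_n^p, \quad u_n > 0, \quad \text{in } \R^3,
\end{align*}
with $\omega_n := \mu_n^{\frac{2p-2}{3p-7}}\la_2$ and $E_{\mu_n}(u_n) = m_{1,\mu_n} \to m_{1,0} > 0$. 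Combining the energy with the Pohozaev identity for the scaled equation yields the representation
\begin{align*}
E_{\mu_n}(u_n) = \frac{3p-7}{6(p-1)}\int_{\R^3}|\nabla u_n|^2 dx + \frac{3p+1}{6(p-1)}\mu_n^{\frac{4p-4}{3p-7}} \int_{\R^3}V u_n^2\, dx,
\end{align*}
so both terms stay bounded; hence $\{u_n\}$ is bounded in $H^1(\R^3)$, and the Nehari identity $\int |\nabla u_n|^2 dx + \mu_n^{\frac{4p-4}{3p-7}}\int V u_n^2 dx + \omega_n = \int u_n^{p+1}dx$ then forces $\{\omega_n\}$ bounded. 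Since each $u_n$ inherits from $u_2$ the radial symmetry in $(x_1,x_2)$ and monotonicity in $x_3$, Proposition \ref{propsymtocom} applies with $d=2$, $N=3$: up to a subsequence, $u_n \rightharpoonup u$ in $H^1(\R^3)$, $u_n \to u$ strongly in $L^{p+1}(\R^3)$, $\omega_n \to \omega^*$, and $\int u_n^{p+1}dx \to c := \int u^{p+1}dx \geq 0$.

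I then perform a dichotomy on $c$. If $c = 0$, the Nehari and Pohozaev identities combined force $\int |\nabla u_n|^2 dx \to -\omega^*/2$ and $\mu_n^{\frac{4p-4}{3p-7}}\int V u_n^2 dx \to -\omega^*/2$; inserted into the energy representation above this gives $\omega^* = -2 m_{1,0} < 0$, whence $\la_2 = \mu_n^{-\frac{2p-2}{3p-7}}\omega_n \to -\infty$, contradicting the uniform bound $\la_2 > -\La_0$ that follows from Theorem \ref{thmcompactness} (via Proposition \ref{propnoposisolu} and Lemma \ref{lemlimitoffirsteig}). Hence $c > 0$ and $u \not\equiv 0$. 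Passing to the weak limit in the equation (the $V$-term pairs to $0$ against any $\varphi \in C_c^\infty(\R^3)$ because its coefficient vanishes), $u \in H^1(\R^3)$ with $u \geq 0$, $u \not\equiv 0$ weakly solves $-\Delta u + \omega^* u = u^p$ in $\R^3$. Testing against $u$ and combining with the standard Pohozaev identity in $\R^3$ for this equation yields
\begin{align*}
\omega^* = \frac{5-p}{2(p+1)}\frac{\int_{\R^3}u^{p+1}dx}{\int_{\R^3}u^2 dx} > 0,
\end{align*}
since $p < 5$.

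Therefore $\omega^* > 0$, and since $\mu_n^{\frac{2p-2}{3p-7}} \to 0^+$, $\la_2 = \mu_n^{-\frac{2p-2}{3p-7}}\omega_n \to +\infty$. The sequence $\mu_n \to 0$ being arbitrary, this gives $\la_2 \to +\infty$ as $\mu \to 0$. The delicate step is the vanishing alternative $c = 0$, which is fully consistent with $H^1$-bounds and the energy identity (yielding $\omega^* = -2m_{1,0} < 0$) and so cannot be ruled out by energy alone; the decisive ingredient that excludes it is the uniform lower bound $\la_2 > -\La_0$ on the Lagrange multipliers, available precisely because $u_2$ is produced as a strong $H$-limit of critical points on balls whose Lagrange multipliers satisfy $\la_{2,R} > -\La_R \to -\La_0$.
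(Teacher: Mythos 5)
Your proof is correct, but it follows a genuinely different route from the paper's. The paper argues by contradiction at the original scale: if $\la_{2,\mu_n}$ stayed bounded along some $\mu_n \to 0$, classical blow-up analysis (Gidas--Spruck) would give a uniform $L^\infty$ bound on $u_{2,\mu_n}$, whence $\int_{\R^3}|u_{2,\mu_n}|^{p+1}dx \leq \|u_{2,\mu_n}\|_\infty^{p-1}\mu_n \to 0$, so $E(u_{2,\mu_n}) \to 0$, contradicting $E(u_{2,\mu_n}) = m_{\mu_n} \geq \nu_{\mu_n} \to \infty$ from \eqref{eqnumu}. You instead work on the rescaled problem on $S_1$ and classify the subsequential limits of the rescaled multipliers $\omega_n$: either the $L^{p+1}$ mass vanishes, in which case the Nehari and Pohozaev identities force $\omega^* = -2m_{1,0} < 0$ and hence $\la_2 \to -\infty$, which is excluded by the uniform bound $\la_2 > -\La_0$ (itself a consequence of Proposition \ref{propnoposisolu} via Theorem \ref{thmcompactness}); or the weak limit is nontrivial (by Proposition \ref{propsymtocom}) and the limit equation forces $\omega^* = \frac{5-p}{2(p+1)}\|u\|_{p+1}^{p+1}/\|u\|_2^2 > 0$, whence $\la_2 = \mu_n^{-\frac{2p-2}{3p-7}}\omega_n \to +\infty$. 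Your computations (the energy representation via Pohozaev, the sign of $\omega^*$, the sub-subsequence closing step) all check out. What your approach buys is self-containedness: it avoids the blow-up machinery, which the paper cites as a black box, and it is essentially the same compactness analysis the paper performs right after invoking the lemma to prove \eqref{eqestiofla2} and \eqref{eqestiofu2}, so it correctly isolates the vanishing alternative as the delicate case and identifies the multiplier bound as the ingredient that kills it. The mild cost is that you rely on facts ($m_{1,\mu} = \mu^{\frac{5-p}{3p-7}}m_\mu$ and $m_{1,\mu} \to m_{1,0} > 0$) established inside the proof of Theorem \ref{thmasym} before the lemma is called; this is not circular, but it makes your version less free-standing than the paper's, whose only external inputs are $\la_2 > -\La_0$ and $\nu_\mu \to \infty$.
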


\begin{proof}
	It is clear that $\la_2 > -\La_0$. Since $\la_2$ and $u_2$ depend on $\mu$, we rename them by $\la_{2,\mu}$ and $u_{2,\mu}$. Arguing by contradiction, we suppose there exists $\mu_n \to 0$ such that $\la_{2,\mu_n}$ is bounded. Using classical blow up analysis, see for example \cite{GiSp}, we deduce that $(u_{2,\mu_n})$ is uniformly bounded in $L^\infty(\R^3)$. Then, since $\int_{\R^3}|u_{2,\mu_n}|^2dx = \mu_n \to 0$, we get $\int_{\R^3}|u_{2,\mu_n}|^{p+1}dx \to 0$ as $n \to \infty$. This in turn indicates that $E(u_{2,\mu_n}) \to 0$ as $n \to \infty$, in a contradiction with
	$$
	E(u_{2,\mu}) = m_{\mu} \geq \nu_{\mu} \to \infty \quad \text{as } \mu \to 0,
	$$
	where $\nu_{\mu} \to \infty$ is deduced from \eqref{eqnumu}. This completes the proof.
\end{proof}

Finally we discuss the orbital stability of the standing wave corresponding to a solution at the mountain pass level $m_\mu$. 

\begin{remark}\label{stability}
It can be proved that the standing wave corresponding to a solution at the mountain pass level $m_\mu$ is orbitally unstable for $\mu > 0$ sufficiently small. As, starting from Theorems \ref{thmmulti} and \ref{thmasym}, the argument is relatively standard, we will just give the broad outlines. Let us first observe that, by \eqref{eqestiofla2}, $\la_2 \to \infty$ as $\mu \to 0$. Next we use the scaling $u_2 \mapsto \la_2^{-1/(p-1)}u_2(\la_2^{-1/2}x)$ and study the limit process as the parameter $\la_2$ to $\infty$. Noting \eqref{eqestiofu2}, it is readily seen that $\la_2^{-1/(p-1)}u_2(\la_2^{-1/2}x)$ converges to $u^*$ strongly in $H^1(\R^3)$ where $u^* \in H^1(\R^3)$ is the unique positive solution of the following equation
	\begin{align*}
		-\Delta u + u = |u|^{p-1}u, \quad u \in H^1(\R^3).
    \end{align*} 
    It is well known that $u^*$ is radially symmetric and nondegenerate. Now we rename $\la_2$ by $\la_{2,\mu}$ and the nondegeneracy of $u^*$ enables us to prove that $u_2$ is nondegenerate for small $\mu$ and show the existence of $\bar \mu \in (0,\mu_1]$ such that $\mu^* = \mu^{**}$ if $0 < \mu^*, \mu^{**} < \bar\mu$ and $\la_{2,\mu^*} = \la_{2,\mu^{**}}$. Thus for sufficient large $\la_2$ we can define a map $\la_2 \mapsto \int_{\R^3}|u_2|^2dx$, denoted by $\mu(\la_2)$. Clearly $\mu(\la_2)$ is monotonically decreasing. Using the implicit function theorem we obtain that $\mu(\la_2)$ is of class $C^1$ and so $\mu'(\la_2) \leq 0$. Further, borrowing the ideas of \cite[Proposition 11 (ii)]{FSK} and by increasing $\la_2$ if necessary, one can prove that $\mu'(\la_2) < 0$. Then, using the general results \cite[Theorem 2 and Theorem 3]{GSS} we conclude that the standing wave corresponding to a solution at the mountain pass level $m_\mu$ is orbitally unstable for sufficiently small $\mu > 0$.
\end{remark}

\end{document}